\numberwithin{equation}{section}
\def\red{\textcolor{red}}
\theoremstyle{plain}
\tikzstyle{pathdefault}=[draw, line width=1, solid, color=black]
\tikzstyle{nodedefault}=[circle, inner sep=1.5, fill=black]
\tikzstyle{empty}=[]
\tikzstyle{nodeellipsis}=[circle, inner sep=0.5, fill=black]
\tikzstyle{pathcolor1}=[draw, line width=1.3, densely dashed, color=red]
\tikzstyle{pathcolor2}=[draw, line width=1.6, densely dotted, color=blue]
\tikzstyle{pathcolorlight}=[draw, line width=1, dotted, color=lightgray]
\tikzstyle{arbpathcolor0}=[line width=1, dashdotted, color=black]
\tikzstyle{arbpathcolor1}=[line width=1, densely dashed, color=red]
\tikzstyle{arbpathdefault}=[line width=1, densely dotted, color=blue]
\newcounter{id}
\newcommand{\drawlinedotswithstyle}[4]{
 \def\x{{#3}}
 \def\y{{#4}}
 \tikzstyle{thispathstyle}=[#1]
 \tikzstyle{thisnodestyle}=[#2]
 \setcounter{id}{-1} 
 \foreach \j in {#3}{\stepcounter{id}} 
 \foreach \i in {1,...,\the\value{id}}{  
  \path[thispathstyle] (\x[\i],\y[\i]) --(\x[\i-1],\y[\i-1]); 
 }
 \foreach \i in {1,...,\the\value{id}}{  
  \node[thisnodestyle] at (\x[\i],\y[\i]) {}; 
 }
 \node[thisnodestyle] at (\x[0],\y[0]) {}; 
}
\definecolor{mhcblue}{HTML}{0077CC} 
\definecolor{davidsonred}{HTML}{AC1A2F} 
\definecolor{green}{RGB}{0, 180, 0}
\definecolor{yellow}{RGB}{180, 180, 0}
\newtheorem{theorem}{Theorem}[section]
\newtheorem{lemma}[theorem]{Lemma}
\newtheorem{proposition}[theorem]{Proposition}
\theoremstyle{definition}
\newtheorem{Def}[theorem]{Definition}
\newtheorem{remark}[theorem]{Remark}
\newtheorem{?}[theorem]{Problem}
\newcommand{\Z}{\mathbb{Z}}
\def\min{\mathrm{min}}
\def\dd{\mathrm{dd}}
\def\des{\mathrm{des}}
\def\S{\mathfrak{S}}
\def\da{\mathrm{da}}
\def\valley{\mathrm{valley}}
\def\val{\mathrm{val}}
\def\pk{\mathrm{pk}}
\def\Orb{\mathrm{Orb}}
\def\bac{2\text{-}13}
\def\cab{31\text{-}2}
\def\bca{2\text{-}31}
\def\acb{13\text{-}2}
\def\cabd{31\text{-}2\text{-}4}
\def\cadb{31\text{-}4\text{-}2}
\def\dabc{41\text{-}2\text{-}3}
\def\dacb{41\text{-}3\text{-}2}
\newcommand{\bB}{{\mathcal B}}
\newcommand\vartextvisiblespace[1][.5em]{%
  \makebox[#1]{%
    \kern.07em
    \vrule height.3ex
    \hrulefill
    \vrule height.3ex
    \kern.07em
  }%
}
\newcommand{\x}{\vartextvisiblespace}
\def\lval{\mathrm{lval}}
\def\lpk{\mathrm{lpk}}
\def\lda{\mathrm{lda}}
\def\ldd{\mathrm{ldd}}
\def\boxit#1{\leavevmode\hbox{\vrule\vtop{\vbox{\kern.33333pt\hrule
    \kern1pt\hbox{\kern1pt\vbox{#1}\kern1pt}}\kern1pt\hrule}\vrule}}
\begin{document}

\title[gamma expansion and pattern avoidance]{$(p,q,t)$-Catalan continued fractions, gamma expansions and pattern avoidances}

\author[B. Han]{Bin Han}
\address[Bin Han]{Department of Mathematics, Royal institute of Technology (KTH), SE 100-44 Stockholm, Sweden} 
\email{binhan@kth.se, han.combin@hotmail.com}
\author[Q. Pan]{Qiongqiong Pan}
\address[Qiongqiong Pan]{College of mathematics and physics, Wenzhou University, Wenzhou 325035, PR China}
\email{qpan@wzu.edu.cn}
%
%
%
  
\date{\today}

\begin{abstract} 
We introduce a kind of $(p, q, t)$-Catalan numbers of Type A by generalizing the Jacobian type continued fraction formula, we prove that the corresponding expansions could be expressed by the polynomials counting permutations on $\S_n(321)$ by various descent statistics. 
Moreover, we introduce a kind of $(p, q, t)$-Catalan numbers of Type B by generalizing the Jacobian type continued fraction formula, we prove that the Taylor coefficients and their $\gamma$-coefficients could be expressed by the polynomials counting permutations on $\S_n(3124, 4123, 3142, 4132)$ by various descent statistics. 
Our methods include permutation enumeration techniques involving variations of bijections from permutation patterns to labeled Motzkin paths and modified Foata-Strehl action.

\end{abstract}
\keywords{Catalan numbers; pattern avoidance; $\gamma$-positive; continued fraction; group action}

\maketitle

\tableofcontents

\section{Introduction}
It is well-known that \emph{Catalan numbers} $C_n=\frac{1}{n+1}{2n\choose n}$ has the following Jacobi-type continued fraction expansion (cf. \cite{BCS08,Fla80,FTHZ})
\begin{align*}
\sum_{n\geq 0} C_{n} z^n=\cfrac{1}{1-1\cdot z-\cfrac{1\cdot z^{2}}{1-2\cdot z-\cfrac{
 1\cdot z^{2}}{1-\cdots}}}
\end{align*}
 In this paper we define the following {\em (p,\,q,\,t)-Catalan continued fraction of Type A} as
\begin{align}\label{Pqcatalan}
&\sum_{n\geq0}
C_n(p,\,q,\,t)z^n=\nonumber\\
&\hspace{0.5cm}\cfrac{1}{1-1\cdot z-\cfrac{tp\cdot z^{2}}{1-(p+q)\cdot z-\cfrac{
 tp^2q\cdot z^{2}}{1-(p^2+q^2)\cdot z-\cfrac{tp^3q^2\cdot z^2}{1-(p^3+q^3)\cdot z-\cfrac{tp^4q^3\cdot z^2}{1-\cdots}}}}}
\end{align}
Where $C_n(p,\,q,\,t)$ is the  {\em $(p,\,q,\,t)$-Catalan  numbers of type A} as the Taylor coefficients in the above continued fraction.

In particular, let $\bar{C}_n(q):=C_n(1,\,q,\,1)$ and $\widetilde{C}_n(q):=C_n(q,\,1,\,1)$. 
We need a standard \emph{contraction formula} for continued fractions, see \cite[Lemma~5.1]{HMZ20}.
\begin{lemma}[Contraction formula]\label{contra-formula}
The following holds
\begin{align*}
\cfrac{1}{
1-\cfrac{c_{1}z}{1-\cfrac{c_{2}z}{
1-\cfrac{c_{3}z}{
1-\cfrac{c_{4}z}
{\ddots}}}}}
&=\cfrac{1}{1-c_{1}z-\cfrac{c_{1}c_{2}z^{2}}{1-(c_{2}+c_{3})z-\cfrac{c_{3}c_{4}z^{2}}{1-(c_4+c_5)z-\cfrac{c_5c_6z^2}{\ddots}}}}.\\
\end{align*}
\end{lemma}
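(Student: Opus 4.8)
The plan is to prove the identity by comparing convergents of the two continued fractions. For the left-hand S-type fraction, write its $n$-th convergent as $A_n/B_n$; taking partial numerators $a_1=1$, $a_k=-c_{k-1}z$ for $k\ge2$, and all partial denominators equal to $1$, the numerators and denominators obey the standard recurrence $X_n=X_{n-1}+a_nX_{n-2}$ (with $X=A$ or $X=B$) and initial data $A_{-1}=1$, $A_0=0$, $B_{-1}=0$, $B_0=1$. For the right-hand J-type fraction, write its $n$-th convergent as $\tilde A_n/\tilde B_n$; its numerators and denominators satisfy $Y_n=\tilde b_nY_{n-1}+\tilde a_nY_{n-2}$ with $\tilde a_1=1$, $\tilde b_1=1-c_1z$, and $\tilde a_n=-c_{2n-3}c_{2n-2}z^2$, $\tilde b_n=1-(c_{2n-2}+c_{2n-1})z$ for $n\ge2$.

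The core step is to show $\tilde A_n=A_{2n}$ and $\tilde B_n=B_{2n}$ for every $n\ge0$. I would do this by eliminating the odd-indexed terms from the S-fraction recurrence: combining $X_{2n}=X_{2n-1}+a_{2n}X_{2n-2}$, $X_{2n-1}=X_{2n-2}+a_{2n-1}X_{2n-3}$, and $X_{2n-3}=X_{2n-2}-a_{2n-2}X_{2n-4}$ (the last being the rearrangement of $X_{2n-2}=X_{2n-3}+a_{2n-2}X_{2n-4}$) yields
\begin{align*}
X_{2n}=\bigl(1+a_{2n-1}+a_{2n}\bigr)X_{2n-2}-a_{2n-1}a_{2n-2}\,X_{2n-4}\qquad(n\ge2).
\end{align*}
Substituting $a_{2n}=-c_{2n-1}z$, $a_{2n-1}=-c_{2n-2}z$, $a_{2n-2}=-c_{2n-3}z$ turns the coefficient of $X_{2n-2}$ into $1-(c_{2n-2}+c_{2n-1})z=\tilde b_n$ and that of $X_{2n-4}$ into $-c_{2n-3}c_{2n-2}z^2=\tilde a_n$. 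Hence $(A_{2n})_{n\ge0}$ and $(\tilde A_n)_{n\ge0}$ satisfy the same three-term recurrence for $n\ge2$, and similarly $(B_{2n})_{n\ge0}$ and $(\tilde B_n)_{n\ge0}$; it then suffices to check the base cases $n=0,1$ directly, which is immediate ($A_0=0=\tilde A_0$, $B_0=1=\tilde B_0$, $A_2=1=\tilde A_1$, $B_2=1-c_1z=\tilde B_1$), and an induction finishes the claim.

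It then remains to pass to the limit of convergents. In the ring $\mathbb{Z}[c_1,c_2,\dots][[z]]$ of formal power series each $B_n$ has constant term $1$, hence is invertible, and $A_nB_{n-1}-A_{n-1}B_n=\pm\prod_{k=1}^{n}a_k$ has $z$-valuation tending to $\infty$, so the convergents $A_n/B_n$ are $z$-adically Cauchy and their limit is, by definition, the value of the left-hand side. Since every subsequence of a convergent sequence has the same limit, this equals $\lim_n A_{2n}/B_{2n}=\lim_n\tilde A_n/\tilde B_n$, the value of the right-hand side, giving the identity.

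The only real work is bookkeeping: aligning the index shifts among the $c_i$ on the two sides (it is convenient to adopt the formal convention $c_0=0$ so that the formula for $\tilde b_n$ is uniform in $n$, while still treating $\tilde a_1=1$ as a genuine special case), and making sure the initial data of the two convergent recurrences are normalized compatibly. There is no analytic subtlety, as everything is carried out at the level of formal power series.
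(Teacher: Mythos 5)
Your argument is correct, but note that the paper itself gives no proof of this lemma at all: it is quoted as a standard contraction formula with a pointer to \cite[Lemma~5.1]{HMZ20}, so there is no internal proof to compare against. What you supply is the classical even contraction of an S-fraction into a J-fraction, carried out at the level of convergents: the elimination of the odd-indexed terms from the three-term recurrence is done correctly, the resulting coefficients $1-(c_{2n-2}+c_{2n-1})z$ and $-c_{2n-3}c_{2n-2}z^2$ match the J-fraction data, the base cases $A_0=\tilde A_0$, $B_0=\tilde B_0$, $A_2=\tilde A_1$, $B_2=\tilde B_1$ are verified, and the passage to the limit is legitimate in $\mathbb{Z}[c_1,c_2,\dots][[z]]$ because each $B_n$ has constant term $1$ and the determinant identity $A_nB_{n-1}-A_{n-1}B_n=\pm a_1\cdots a_n$ gives $z$-adic Cauchyness (with $a_1=1$ the valuation is $n-1$), so the even convergents of the S-fraction converge to the same formal series as the full sequence, which is exactly the value of the J-fraction. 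An alternative, and perhaps shorter, route that is often used for this lemma works directly on the tails: writing $S_k=1/(1-c_kzS_{k+1})$, the two-level identity
\begin{equation*}
\cfrac{1}{1-\cfrac{az}{1-bzG}}=\cfrac{1}{1-az-\cfrac{abz^2\,G}{1-bzG}},\qquad
\frac{G}{1-bzG}=\frac{1}{1-bz-czG'}\quad\text{when }G=\frac{1}{1-czG'},
\end{equation*}
can be iterated to produce the J-fraction coefficients level by level, proving the identity modulo $z^N$ for every $N$ without introducing convergent recurrences; your convergent-based proof buys instead a completely explicit induction with all bookkeeping visible. Either way, your proposal is a complete and rigorous proof of the cited statement.
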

By contraction we derive the two $q$-Catalan numbers
\begin{equation}\label{Adef-q-cat1}
\sum_{n=0}^\infty \bar{C}_n(q) z^n=
\cfrac{1}{
1-\cfrac{z}{1-\cfrac{z}{
\cfrac{\ddots}{1-\cfrac{q^{k-1}z}{1-\cfrac{z}{\ddots}
}}}}},
\end{equation}

and 

\begin{equation}\label{Adef-q-cat2}
\sum_{n=0}^\infty \widetilde{C}_n(q) z^n=
\cfrac{1}{
1-\cfrac{z}{1-\cfrac{qz}{
\cfrac{\ddots}{1-\cfrac{z}{1-\cfrac{q^kz}{\ddots}
}}}}}.
\end{equation}

\begin{figure}[t]\label{fig1}
  $$
  \begin{array}{c|cccc|c}
\hbox{$n$}\backslash\hbox{$k$}&0&1&2&3&C_n\\
\hline
1& 1&&&&1\\
2& 2&&&&2\\
3& 4&1&&&5\\
4& 8&5&1&&14\\
5& 16&17&7&2&42\\
\end{array}
\qquad\qquad
 \begin{array}{c|cccccc|c}
 n\diagdown  k &0&1&2&3&4&5&C_n\\
 \hline
 1&1&&&&&&1\\
 2&1&1&&&&&2\\
 3&1&3&1&&&&5\\
 4&1&6&5&2&&&14\\
 5&1&10&15&12&3&1&42\\
 \end{array}
 $$
 \caption{The first values of $\bar{C}_n(q)$ (left) and  $\widetilde{C}_n(q)$ (right) for $0< n\leq 5$.}
 \end{figure}

We give the first few terms for $\bar{C}_n(q)$ and $\widetilde{C}_n(q)$ see Fig~\ref{fig1} and note that neither $\bar{C}_n(q)$ nor $\widetilde{C}_n(q)$ is registered in the OEIS.


\emph{Type B Catalan numbers} $\bB_n={2n\choose n}$ has  the following Jacobi-type continued fraction expansion (cf. \cite{Fla80, Sok19})
\begin{align}\label{2-motzkin}
\sum_{n\geq 0} \bB_{n} z^n=\cfrac{1}{1-2\cdot z-\cfrac{2\cdot z^{2}}{1-2\cdot z-\cfrac{
 1\cdot z^{2}}{1-\cdots}}}.
\end{align}

We define the following {\em (p,\,q,\,t)-Catalan continued fraction of Type B} as
\begin{align}
\sum_{n=0}^{\infty}\bB_n(p, q,t)z^{n}=\cfrac{1}{1-(1+t)z-\cfrac{(p+q)tz^2}{1-p(1+t)z-\cfrac{p^3tz^2}{1-p^2(1+t)z-\cfrac{p^5tz^2}{\ddots}}}}.
\end{align}
Where $\bB_n(p,\,q,\,t)$ is the  {\em $(p,\,q,\,t)$-Catalan  numbers of Type B} as the Taylor coefficients in the above continued fraction.

The combinatorial constructions behind the proof of Theorem~\ref{Apqcatalan} and Theorem~\ref{Bpqcatalan}, two of our main results, originated from the fundamental work of Flajolet~\cite{Fla80} for the lattice path interpretation of the formal continued fractions, and a bijection between sets of certain weighted  \emph{Motzkin paths} and permutations  due to Fran\c con-Viennot\cite{FV}, see also \cite{CSZ, FZ, SZ10,SZ12}, and the {\em insertion encoding}, an encoding of finite permutations introduced by Albert, Linton and Ru\v{s}kuc in~\cite{ALR05}.

The goal of this paper is to establish combinatorial interpretations for $C_n(p, q,t)$ and
$\bB_n(p, q,t)$ as well as their corresponding $\gamma$-expansions, using pattern avoiding permutations, which we define now.
Denote by $\S_n$ the set of permutations of length $n$. Given two permutations $\pi\in\S_n$ and $p\in\S_k,\:k\leq n$, we say that \emph{$\pi$ avoids the pattern $p$} if there does not exist a set of indices $1\le i_1<i_2<\cdots<i_k\le n$  such that the subsequence $\pi(i_1)\pi(i_2)\cdots\pi(i_k)$ of $\pi$ is order-isomorphic to $p$. For example, the permutation $15324$ avoids $231$. The set of permutations of length $n$ that avoid patterns $p_1,p_2,\cdots,p_m$ is denoted as $\S_n(p_1,p_2,\cdots,p_m)$.


 Many polynomials with combinatorial meanings have been shown to be unimodal; 
see the recent survey of Br\"and\'en~\cite{bran}. Recall that a polynomial $h(t)=\sum_{i=0}^dh_it^i$ of degree $d$ is said to be {\em unimodal} if the coefficients are increasing 
and then decreasing, i.e., there is an index $c$ such that
$
h_0\leq h_1\leq \cdots\leq h_c\geq h_{c+1}\geq\cdots\geq h_d.
$
Let $p(t)=a_rt^r+a_{r+1}t^{r+1}+\cdots +a_{s} t^{s}$ be a real polynomial with $a_r\neq0$ and $a_s\neq0$. It is called  \emph{palindromic}   (or \emph{symmetric}) {\em of center $n/2$}  if $n=r+s$ and $a_{r+i}=a_{s-i}$ for $0\leq i\leq n/2$. For example,
 polynomials $1+t$ and $t$ are palindromic of center  $1/2$ and 1, respectively.
Any palindromic  polynomial $p(t)\in \Z[t]$  can be written uniquely~\cite{bran,swz} as
  $$
p(t)=\sum_{k=r}^{\lfloor\frac{n}{2}\rfloor}\gamma_{k}t^k(1+t)^{n-2k},
$$
where $\gamma_{k}\in\Z$. If $\gamma_{k}\geq 0$ then we say that it is 
 {\em $\gamma$-positive of  center $n/2$}.
It is clear  that the 
$\gamma$-positivity implies palindromicity and unimodality. 
For further $\gamma$-positivity results and problems, the reader is referred to the excellent exposition by Petersen~\cite{Pet} and  the most recent survey
 by Athanasiadis~\cite{Ath}.

Now we give the first three main results of this paper, with the definitions of permutation statistics, permutation patterns
postponed to the next section.

\begin{theorem}\label{Apqcatalan}
The $(p,q,t)$-Type A Catalan numbers $C_n(p, q,t)$ have the following interpretation
\begin{align*}
C_n(p, q,t)=\sum_{\sigma\in\S_n(321)}p^{\widehat{(\bac)}\:\sigma}q^{(\cab)\,\sigma}t^{\des\:\sigma}.
\end{align*}

\end{theorem}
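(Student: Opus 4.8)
The plan is to combine Flajolet's combinatorial theory of $J$-fractions \cite{Fla80} with a statistic-preserving bijection from $\S_n(321)$ onto a family of weighted Motzkin paths, and then read the stated identity off by matching weights. By Flajolet's master theorem, the Taylor coefficient $C_n(p,q,t)$ of \eqref{Pqcatalan} equals the generating polynomial $\sum_{\mu}w(\mu)$ over all Motzkin paths $\mu$ of length $n$ in which a level step at height $0$ has weight $1$, a level step at height $h\ge 1$ has weight $b_h=p^h+q^h$, and each up-step from height $h-1$ to $h$, together with its matching down-step, contributes $\lambda_h=tp^{h}q^{h-1}$. Expanding $b_h=p^h+q^h$ turns this into a sum over \emph{bicoloured} Motzkin paths, each positive-height level step being declared of type $p$ (with weight $p^h$) or of type $q$ (with weight $q^h$); distributing $\lambda_h$ between its two matching steps as $p^{h}$ on the up-step entering height $h$ and $tq^{h-1}$ on the down-step leaving height $h$ (any other split does equally well), the total weight of such a path becomes $t^{u}p^{A}q^{B}$, where $u$ is the number of up-steps, $A$ adds up the heights of the up-steps and of the type-$p$ level steps, and $B$ adds up the heights of the type-$q$ level steps together with the lower endpoint of every down-step.

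The next step is to produce a bijection $\varphi\colon\S_n(321)\to\{\text{bicoloured Motzkin paths of length }n\}$ and to transport the three relevant statistics through it. A natural vehicle is the Fran\c con--Viennot correspondence between permutations and Laguerre histories \cite{FV} (or one of the variants in \cite{CSZ,FZ,SZ10,SZ12}), which is built precisely to convert descent statistics and the vincular patterns $\bac$ and $\cab$ into explicit path data, and under which, for $321$-avoiding permutations, all the history labels become trivial except the binary colour of the positive-height level steps; alternatively one may run the insertion encoding of a $321$-avoiding permutation \cite{ALR05}, recording an up-, level- or down-step according as a new slot is opened, an interior slot is filled, or the last slot is filled and closed, with the number of active slots giving the height. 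Either way, the core of the argument is to check that under $\varphi$ one has $\des\sigma=u$, $\widehat{(\bac)}\,\sigma=A$, and $(\cab)\,\sigma=B$. The first identity is routine for such encodings. For the other two I would argue incrementally: reading $\sigma$ one step of the correspondence at a time, count how many new occurrences of $\bac$ (resp.\ of $\cab$) that step creates; the claim is that this count equals exactly the exponent of $p$ (resp.\ of $q$) on the matching path step, namely $h$ for an up-step or a type-$p$ level step at height $h$, and $h-1$ (resp.\ $h$) for a down-step (resp.\ a type-$q$ level step) at height $h$. Summing these local identities along the path gives $\widehat{(\bac)}\,\sigma=A$ and $(\cab)\,\sigma=B$, and with the first paragraph this yields $\sum_{\sigma\in\S_n(321)}p^{\widehat{(\bac)}\,\sigma}q^{(\cab)\,\sigma}t^{\des\,\sigma}=\sum_{\mu}t^{u}p^{A}q^{B}=C_n(p,q,t)$, as claimed.

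The hard part is exactly this local bookkeeping for the two vincular statistics. Both $\widehat{(\bac)}$ and $(\cab)$ are global in nature, counting pairs and triples of positions of $\sigma$, so one must show that they nonetheless localise along $\varphi(\sigma)$ in precisely the pattern $p^{h},\,p^{h},\,tq^{h-1},\,q^{h}$ prescribed by the four step types. This is what pins down the bijection: $\varphi$ must be arranged so that, when the path sits at height $h$, the $h$ currently active slots are in an explicit, order-respecting bijection with the $h$ (or $h-1$) pattern occurrences the next step will create, and so that the ``virtual'' occurrences recorded by the hat in $\widehat{(\bac)}$ are correctly tallied when the path returns to the ground. Once this case analysis over the four step types is carried through, the theorem follows. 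As a sanity check, the two sides already agree for small $n$: for $n=3$ both equal $1+2pt+p^{2}t+pqt$, and for $n=4$ a direct comparison of the fourteen $321$-avoiding permutations of $[4]$ with the fourteen bicoloured Motzkin paths of length $4$ confirms the identity.
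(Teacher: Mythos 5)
Your overall strategy is the paper's: read $C_n(p,q,t)$ off Flajolet's correspondence (Lemma~\ref{flajolet}) as a sum over bicoloured weighted Motzkin paths, and transfer $\des$, $\widehat{(\bac)}$, $(\cab)$ through a Fran\c con--Viennot-type bijection restricted to $\S_n(321)$; the paper does exactly this, via the restricted Fran\c con--Viennot map together with the insertion-encoding description of $\S_n(321)$ (Proposition~\ref{P2}), proving the refinement Theorem~\ref{thm:A1}. The problem is that your write-up stops precisely where the proof has to happen. You never fix the bijection, never determine its image on $\S_n(321)$ (the paper's path diagrams of type A, Definition~\ref{def1}: no $\mathrm{L}_r$ steps, forced labels on $\mathrm{U}$ and $\mathrm{D}$ steps, and the label of a level step at height $h$ restricted to the two values $0$ and $h$ --- this last restriction is what yields the coefficient $p^h+q^h$ rather than a full $(p,q)$-analogue, and it requires Proposition~\ref{P2}), and never carry out the four-case check of the local values of $\widehat{(\bac)}$ and $(\cab)$, which you yourself flag as ``the claim'' and ``the hard part''. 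That check is not routine bookkeeping; it is the entire content of the theorem, and it is what the paper supplies in the remark following Definition~\ref{def1} together with the weight assignment in its proof.

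Moreover, the specific local statement you propose to verify is not the one the natural bijection satisfies. Under the restricted Fran\c con--Viennot map the statistics localize with $(\cab)_k=h$ on $\mathrm{U}$-steps, $\widehat{(\bac)}_k=h$ on $\mathrm{D}$-steps, and $(\widehat{(\bac)}_k,(\cab)_k)=(\xi,h-\xi)$ with $\xi\in\{0,h\}$ on level steps; that is, $q$ sits on up-steps and $p$ (with $t$) on down-steps, the transpose of your split, which places $p^h$ on the up-step into height $h$ and $tq^{h-1}$ on the down-step out of it. The per-path totals agree under either split, because every path has as many up-steps into level $h$ as down-steps out of it, so your formula $t^{u}p^{A}q^{B}$ for the path weight is not wrong; but the literal step-by-step claim you defer (``the occurrences created at this step equal the exponent carried by this step'') fails: for $\sigma=312$ the unique $\widehat{(\bac)}$-occurrence is carried by the largest letter $3$, i.e.\ by the $\mathrm{D}$-step, where your split attaches no power of $p$ at all, while your up-step is supposed to account for it before any pattern can exist. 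So the deferred case analysis would have to be redone with the correct localization (or supplemented by the up/down rebalancing observation above). In short: right plan, same route as the paper, correct sanity checks, but the decisive verification is missing and the version of it you promise to carry out is off by a transposition.
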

\begin{remark}
 Barnabei, Bonetti and Silimbani \cite[Theorem~6]{BBS10} gave an ordinary generating function formula for $t^nC_n(1, 1,1/t)$ by exploiting Krattenthaler's bijection between 123-avoiding permutations and Dyck paths(see OEIS: A166073).

\end{remark}

\begin{theorem}\label{Bpqcatalan}
The $(p,q,t)$-Type B Catalan numbers $\bB_n(p, q, t)$ have the following interpretation
\begin{align*}
\bB_n(p, q, t)=\sum_{\sigma\in\S_{n+1}(3124, 4123, 3142, 4132)}p^{(\bac)\:\sigma}q^{(\cab)\,\sigma}t^{\des\:\sigma}.
\end{align*}

\end{theorem}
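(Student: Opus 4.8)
The plan is to convert the continued-fraction definition of $\bB_n(p,q,t)$ into a weighted lattice-path model via Flajolet's theory, and then build a statistic-preserving bijection from those paths to the pattern class $\S_{n+1}(3124,4123,3142,4132)$. First I would apply Flajolet's master theorem to read off from the Type B continued fraction that $\bB_n(p,q,t)=\sum_{M}\mathrm{wt}(M)$, where $M$ ranges over \emph{Motzkin paths} of length $n$ with level steps at height $k$ weighted by $p^k(1+t)$ (decomposing into a $p^k$-weighted step and a $p^kt$-weighted step, i.e. two colors of level step), up-steps from height $k{-}1$ weighted by $p^{2k-1}t$ (for the first up-step, $k=1$, a $t$ is extracted and the rest attached to the matching down-step, reflecting the $(p+q)$ versus $p^3,p^5,\dots$ asymmetry in the numerators), and down-steps weighted so that each up-down pair at height $k$ contributes $p^{2k-1}$ and the lowest pair contributes an extra factor $p+q$ split as ``$p$ or $q$.'' The upshot: $\bB_n(p,q,t)$ equals a sum over bicolored Motzkin paths of length $n$ where each up-step carries a $\{p,q\}$ choice at its opening (only the step returning to height $0$, or uniformly — the exact placement must be pinned down by matching low-order terms), height contributes powers of $p$, and the number of ``$t$-colored'' level steps plus one per up-step is tracked by $t$.

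Next I would recall the \emph{insertion encoding} of Albert--Linton--Ru\v{s}kuc and the known fact (which I would either cite or reprove) that permutations in $\S(3124,4123,3142,4132)$ have a \emph{regular} insertion encoding, so that $\S_{n+1}$ in this class is in bijection with words/paths of a prescribed shape; concretely this pattern class is one of the classes whose insertion encoding is recognized by a finite automaton, and the generating function it yields is exactly a $J$-fraction of the displayed shape. The combinatorial heart is to upgrade this enumerative coincidence to a bijection $\Phi$ from the weighted Motzkin paths above onto $\S_{n+1}(3124,4123,3142,4132)$ under which: the height-weight $\sum_k k\cdot(\#\text{steps at height }k)$ becomes $(\bac)\,\sigma$; the $\{p,q\}$-choices on up-steps become the split between $(\bac)\,\sigma$ and $(\cab)\,\sigma$ (so that the $p$-exponent on the path side is the total, and each ``choose $q$'' instance moves one unit from the $(\bac)$-count to the $(\cab)$-count); and the $t$-statistic (colored level steps plus up-steps) becomes $\des\,\sigma$. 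I would model $\Phi$ on the Fran\c con--Viennot-type bijection used for $321$-avoiding permutations in the proof of Theorem~\ref{Apqcatalan}, reading an insertion-encoding slot-word off the path and inserting the values $1,2,\dots,n+1$ one at a time, tracking how each step type creates, fills, or passes a descent and how it increments the occurrences of the vincular patterns $\bac$ and $\cab$.

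The main obstacle I expect is the bookkeeping that reconciles the \emph{asymmetric} numerators of the Type B fraction — the leading $(p+q)tz^2$ against the later purely-$p$-power numerators $p^3tz^2,p^5tz^2,\dots$ — with a clean, local description of where the $q$'s live on the path and, correspondingly, which occurrences of $\cab$ in $\sigma$ they correspond to. Getting this exactly right almost certainly forces the extra $p$-powers from the ``missing'' $q$-factors at higher levels to be re-attributed to height weights, and one must check this reattribution is consistent at every height; a sign that it is correct will be that setting $p=q=1$ recovers the unweighted $2$-Motzkin / $\bB_n$ count and setting $t=1$ collapses $\des$ appropriately. A secondary technical point is verifying that $\Phi$ genuinely lands in the four-pattern class and is onto: here I would lean on the regularity of the insertion encoding to characterize membership by a forbidden-factor condition on the slot-word, and then check that the path constraints (steps never going below height $0$, boundary conditions at the ends) are precisely the image of that condition. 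Once the bijection and the three statistic identifications are established, Theorem~\ref{Bpqcatalan} follows by comparing $\sum_M \mathrm{wt}(M)$ with $\sum_{\sigma}p^{(\bac)\sigma}q^{(\cab)\sigma}t^{\des\sigma}$ term by term.
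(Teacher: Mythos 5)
Your plan is essentially the paper's own proof: the paper also applies Flajolet's correspondence to realize $\bB_n(p,q,t)$ (in fact a refinement with extra variables for double ascents, double descents and valleys) as a sum over weighted $2$-Motzkin-type path diagrams, and then uses the Albert--Linton--Ru\v{s}kuc insertion-encoding characterization of $\S(3124,4123,3142,4132)$ to show the Fran\c con--Viennot map restricts to a bijection onto exactly those path diagrams. The one point you left open --- where the $\{p,q\}$ choice lives --- is resolved in the paper precisely as you guessed: the choice sits on down-steps at height $1$ (the step returning to height $0$), where $\xi''_k\in\{0,1\}$ and the $q$-exponent is $(\cab)_k\,\sigma=1-\xi''_k$, all other steps carrying pure $p$-powers equal to $(\bac)_k\,\sigma$.
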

\begin{remark}
When $p=q=t=1$, Albert, Linton and Ru\v{s}kuc in \cite[Section~7.4]{ALR05} gave the above combinatorial interpretation for Type B Catalan numbers. 
\end{remark}

\begin{theorem}\label{thm:des-qgamma}
For  $n\geq1$, the following $\gamma$-expansions formula holds true
\begin{align}\label{qgamma-unify}
\bB_n(p, q,t)=\sum_{k=0}^{\lfloor \frac{n}{2}\rfloor}
\gamma_{n+1,k}(p, q)t^k(1+t)^{n-2k},
\end{align}
where
\begin{equation}\label{qgammacoe}
\gamma_{n+1, k}(p, q):= \sum_{\sigma\in \S_{n+1, k}}p^{(\bac)\,\sigma}q^{(\cab)\,\sigma},
\end{equation}
and
\begin{align*}
\S_{n+1, k}:=\{\sigma\in \S_{n+1}(3124, 4123, 3142, 4132): \dd(\sigma)=0, \val\,\sigma=\des\,\sigma=k\}.
\end{align*}
\end{theorem}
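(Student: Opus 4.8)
The plan is to deduce the $\gamma$-expansion from the combinatorial model of Theorem~\ref{Bpqcatalan} by means of a group action on $\S_{n+1}(3124,4123,3142,4132)$ in the spirit of the modified Foata--Strehl action (``valley-hopping''): an action that fixes the two pattern statistics $(\bac)$ and $(\cab)$ and whose orbits contribute to $\bB_n(p,q,t)$ exactly a factor $t^k(1+t)^{n-2k}$. Since Theorem~\ref{Bpqcatalan} already gives
\[
\bB_n(p,q,t)=\sum_{\sigma\in\S_{n+1}(3124,4123,3142,4132)}p^{(\bac)\,\sigma}q^{(\cab)\,\sigma}t^{\des\,\sigma},
\]
it then suffices to re-sum this over the orbits of the action.

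First I would set up the action. Pad each $\sigma\in\S_{n+1}$ by $\sigma_0=\sigma_{n+2}=0$ and classify every value $x=\sigma_i$ as a peak, valley, double ascent, or double descent from the relative order of $\sigma_{i-1},x,\sigma_{i+1}$; with this convention $\pk\,\sigma=\val\,\sigma+1$ and $\des\,\sigma=\val\,\sigma+\dd\,\sigma$. For each value $x\in[n+1]$ let $\varphi_x$ relocate $x$ within the maximal block of consecutive entries larger than $x$ that surrounds it, so as to exchange ``double ascent'' $\leftrightarrow$ ``double descent'', and let $\varphi_x$ fix $x$ when $x$ is a peak or a valley. The $\varphi_x$ are commuting involutions, hence generate a group action; along the orbit $\Orb(\sigma)$ the sets of peak-values and of valley-values are invariant (so $k:=\val$ is an orbit invariant), while each of the remaining $n-2k$ entries can independently be made a double ascent or a double descent. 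Hence $\sum_{\tau\in\Orb(\sigma)}t^{\des\,\tau}=\sum_{j=0}^{n-2k}\binom{n-2k}{j}t^{k+j}=t^{k}(1+t)^{n-2k}$, and $\Orb(\sigma)$ contains a unique element $\hat\sigma$ with $\dd\,\hat\sigma=0$, for which $\des\,\hat\sigma=\val\,\hat\sigma=k$.

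The crux of the proof, and the step I expect to be the main obstacle, is to verify two compatibilities of the action with the pattern class:
(i) $\varphi_x$ maps $\S_{n+1}(3124,4123,3142,4132)$ into itself; and
(ii) $(\bac)\,\varphi_x(\sigma)=(\bac)\,\sigma$ and $(\cab)\,\varphi_x(\sigma)=(\cab)\,\sigma$.
For (i), one exploits the common shape of the four forbidden patterns — each is ``(one of the two largest entries)\,(the smallest entry)\,(the remaining two entries in some order)'' — to show that any occurrence of one of them in $\varphi_x(\sigma)$ pulls back to an occurrence (of possibly another of the four) in $\sigma$, after a short case analysis on which of the four entries, if any, is the relocated value $x$. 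For (ii), since $(\bac)$ and $(\cab)$ are \emph{vincular} statistics — the ``$13$'' in $2\text{-}13$ and the ``$31$'' in $31\text{-}2$ must occupy adjacent positions — one must further check that moving $x$ creates and destroys such adjacent pairs in matching numbers; this is the genuinely delicate bookkeeping. The cleanest way to organize both (i) and (ii), and the route I would actually take, is to transport the whole situation through the Fran\c{c}on--Viennot/Flajolet-type bijection already used to prove Theorem~\ref{Bpqcatalan}: under it $\S_{n+1}(3124,4123,3142,4132)$ corresponds to a concrete family of labeled Motzkin paths, $(\bac)$ and $(\cab)$ become statistics carried by up- and level-steps, $\des$ counts the down-steps together with a binary label on the level-steps, and $\varphi_x$ becomes the transparent operation of flipping one such label — so that (i) and (ii) hold by inspection; and if plain valley-hopping should happen to leave the class, the action can simply be \emph{defined} on the path side (a ``variation'' of the modified Foata--Strehl action), which by construction preserves everything.

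Granting (i) and (ii), the proof concludes by partitioning $\S_{n+1}(3124,4123,3142,4132)$ into orbits: by (ii) the monomial $p^{(\bac)\,\sigma}q^{(\cab)\,\sigma}$ is constant along each orbit $O$ and equals its value at the $\dd=0$ representative $\hat\sigma_O$, whence
\begin{align*}
\bB_n(p,q,t)&=\sum_{\text{orbits }O}\Bigl(p^{(\bac)\,\hat\sigma_O}q^{(\cab)\,\hat\sigma_O}\Bigr)\,t^{k_O}(1+t)^{\,n-2k_O}\\
&=\sum_{k=0}^{\lfloor n/2\rfloor}\Bigl(\sum_{\sigma\in\S_{n+1,k}}p^{(\bac)\,\sigma}q^{(\cab)\,\sigma}\Bigr)\,t^{k}(1+t)^{\,n-2k},
\end{align*}
where $k_O=\val\,\hat\sigma_O$, and we used that the $\dd=0$ representatives with $\val=k$ are exactly the elements of $\S_{n+1,k}$ and that $\val\le\lfloor n/2\rfloor$. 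This is precisely \eqref{qgamma-unify}--\eqref{qgammacoe}.
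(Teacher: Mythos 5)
Your proposal is correct in substance, and it in fact interleaves the paper's \emph{two} proofs of (the generalization of) this theorem. Your main line --- valley-hopping on $\S_{n+1}(3124,4123,3142,4132)$, constancy of $p^{(\bac)}q^{(\cab)}$ on orbits, and the orbit sum $\sum_{\tau\in\Orb(\sigma)}t^{\des\,\tau}=t^{k}(1+t)^{n-2k}$ with the unique $\dd=0$ representative --- is exactly the paper's group-action proof. But the two verifications you flag as the crux are handled differently there: for closure of the class, the paper does \emph{not} argue by pulling back classical-pattern occurrences (your sketched case analysis is the one step that is not clearly workable as stated, since $\varphi'_x$ moves $x$ across a whole block of larger entries); instead it first proves that the class coincides with the vincular class $\S_{n+1}(\cabd,\cadb,\dabc,\dacb)$ (Lemma~\ref{lem:pattern}) and then shows the total vincular count $(\cabd+\cadb+\dabc+\dacb)$ is a valley-hopping invariant (Lemmas~\ref{One} and~\ref{lem:stat}), which gives closure for free; and the invariance of $(\bac)$ and $(\cab)$ is not routine bookkeeping but a quoted theorem (Lemma~\ref{lem:stat2}, from \cite{FTHZ}). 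Your preferred fallback --- transporting everything through the Fran\c{c}on--Viennot bijection $\Phi_2$ and flipping the $L_b/L_r$ type of level steps on the path side --- is sound given Section~\ref{Sec:proofB} (the membership criterion of Definition~\ref{D3} and the labels are blind to the level-step type, and $q$ only appears on down-steps), but it is then essentially the paper's \emph{first} proof: the continued fraction \eqref{cf-pqNaraB} depends on $t,u,v,w$ only through $u+tv$ and $tw$, which is what the substitution argument there exploits. The one caveat is that your stronger claim that the permutation-side MFS action corresponds to this path-side label flip is itself a nontrivial fact (it is essentially what \cite{FTHZ} proves), so if you take the path-side route you should define the action there, as you indicate, rather than assert the correspondence.
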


For example, the first few expansions  of $\bB_n(p, q,t)$  read as follows:
\begin{align*}
\bB_1(p, q,t)=&1+t;\\
\bB_2(p, q,t)=&(1+t)^2+(p+q)t;\\
\bB_3(p, q,t)=&(1+t)^3+(p+2)(p+q)t(1+t);\\
\bB_4(p, q,t)=&(1+t)^4+(p+q)(p^2+2p+3)t(1+t)^2+t^2(p^3+p+q)(p+q);\\
\bB_5(p, q,t)=&(1+t)^5+(p+q)(p^3+2p^2+3p+4)t(1+t)^3\\
&+t^2(1+t)(p+q)(p^5+2p^4+2p^3+2p^2+(2q+3)p+3q).
\end{align*}

\begin{remark}
Postnikov, Reiner and Williams \cite[Proposition~11.15]{prw} showed $$\gamma_{n+1,k}(1,1)={{n}\choose{k,k, n-2k}}$$ by the corresponding $h$-polynomials and standard quadratic transformations of hypergeometric series.  However, we give $\gamma_{n,k}(p,q)$ by continued fraction theory and bijection between permutation patterns and weighted Motzkin paths.


\end{remark}

The rest of this paper is organized as follows. In Section~\ref{sec2: Pre}, we give most of the definitions and provide the previously known results, which will be used to prove Theorems~\ref{Apqcatalan} and \ref{Bpqcatalan} in Section~\ref{Sec:proofA} and Section~\ref{Sec:proofB} respectively. In Section~\ref{Sec:proofg}, we introduce Br\"and\'en's modified Foata-Strehl action and prove the theorem~\ref{thm:des-qgamma}. We consider a variation of $(q,t)$-Catalan numbers in Section~\ref{Varicatalan}.

\section{Definitions and Preliminaries}\label{sec2: Pre}

\subsection{Permutation statistics}
For $\sigma=\sigma(1)\sigma(2)\cdots\sigma(n)\in\S_n$ with convention $\sigma(0)=\sigma(n+1)=0$, a value $\sigma(i)$ ($1\leq i\leq n$) is called
\begin{itemize}
\item 
a \emph{peak} if $\sigma(i-1)<\sigma(i)$ and $\sigma(i)>\sigma(i+1)$;
\item
a \emph{valley} if $\sigma(i-1)>\sigma(i)$ and $\sigma(i)<\sigma(i+1)$;
\item
a \emph{double ascent} if $\sigma(i-1)<\sigma(i)$ and $\sigma(i)<\sigma(i+1)$;
\item
a \emph{double descent} if $\sigma(i-1)>\sigma(i)$ and $\sigma(i)>\sigma(i+1)$.
\end{itemize}
And we call a permutation $\sigma$ has a descent at $i$ with $1\leq i<n$ if $\sigma(i)>\sigma(i+1)$. 
The set of peaks (resp. valleys, double ascents, double descents, descents) of $\sigma$
is denoted by 
$$\mathrm{Pk}\, \sigma\,\,(\text{resp.}\; \mathrm{Val}\,\sigma, \,\mathrm{Da}\, \sigma,\, \mathrm{Dd}\, \sigma,\,\mathrm{Des}\,\sigma).
$$
Let $\pk\,\sigma$ (resp. $\val\,\sigma,\,\da\,\sigma,\,\dd\,\sigma,\,\des\,\sigma$) be the number of peaks (resp. valleys, double ascents, double descents, descents) of $\sigma$. For $i\in[n]:=\{1,\ldots,n\}$, we introduce the following statistics:
\begin{align}
(\cab)_i\,\sigma=\#\{j:1<j<i\; \text{and}\; \sigma(j)<\sigma(i)<\sigma(j-1)\}\\
(\bca)_i\,\sigma=\#\{j:i<j<n\; \text{and} \;\sigma(j+1)<\sigma(i)<\sigma(j)\}\\
(\bac)_i\,\sigma=\#\{j:i<j<n \;\text{and} \;\sigma(j)<\sigma(i)<\sigma(j+1)\}\\
(\acb)_i\,\sigma=\#\{j:1<j<i \;\text{and}\; \sigma(j-1)<\sigma(i)<\sigma(j)\}
\end{align}
and define four statistics:
$$
(\cab)=\sum_{i=1}^n(\cab)_i,\hspace{0.4cm} (\bca)=\sum_{i=1}^n(\bca)_i,\hspace{0.4cm} (\bac)=\sum_{i=1}^n(\bac)_i,\hspace{0.4cm} (\acb)=\sum_{i=1}^n(\acb)_i.
$$
\\
For $\sigma\in\S_n$ with convention $\sigma(0)=0$ and $\sigma(n+1)=\infty$, let
$$
\mathrm{Lpk}\, \sigma\,\,(\text{resp.}\; \mathrm{Lval}\, \sigma, \,\mathrm{Lda}\, \sigma,\, \mathrm{Ldd}\,\sigma)
$$
be the set of peaks (resp. valleys, double ascents and double descents) of $\sigma$ and denote the corresponding cardinality by $\lpk$ (resp. $\lval$, $\lda$, $\ldd$).

\subsection{Insertion encoding of permutations}
Every permutation can be generated from the empty permutation by successive insertion of a new maximum element. We call this procedure the \emph{evolution} of a permutation. This 
observation is used as a basis for the generating tree methodology. In this methodology, a partially constructed permutation belonging to some specified pattern class is viewed as having a number of active sites-points at which a new maximum could be inserted while remaining in the pattern class. In~\cite{ALR05}, Albert, Linton and Ru\v{s}kuc modified this viewpoint slightly in that for them an active site in the construction of a permutation is one in which a new maximum element will eventually be inserted.
\vskip 1mm
At each preliminary stage we index the slots which are present from left to right beginning with $1$. Then each insertion is determined by the following:
\begin{itemize}
\item 
The index of the slot in which it occurs.
\item
The way in which the slot is affected by the insertion.
\end{itemize}
 There are four different types of insertion of a new maximum element $n$ within a slot:
 \begin{itemize}
\item 
$\x\longrightarrow\x\, n\,\x$ represented by $m$ (for middle)
\item
$\x\longrightarrow n\,\x$ represented by $l$ (for left)
\item
$\x\longrightarrow \x\,n$ represented by $r$ (for right)
\item
$\x\longrightarrow n$ represented by $f$ (for fill)
\end{itemize}
If we subscript the insertion type with the slot on which it operates, we obtain a uniquely defined encoding of any permutation. For instance, the insertion encoding of $423615$ is $m_1m_1l_2f_1f_2f_1$.
\\
For clarity, we sometimes use  negative subscripts in counting slots from the right. That is $m_{-1}$ represents a middle insertion into the rightmost slot and $r_{-2}$ represents an insertion on the right hand end of the second slot from the right. There are also situations where \emph{a modification of the insertion encoding} is desirable. Suppose that we are interested in studying a collection of permutations $\mathcal{C}$ with the property that for any $\pi,\,\tau\in\mathcal{C}$ the permutation $\pi\oplus\tau$ which has an initial segment of small values order isomorphic to $\pi$ followed by a segment of larger values order isomorphic to $\tau$ is also in $\mathcal{C}$ (for instance,$21\oplus312=21534$). Then it is natural to retain a "free slot" on the right hand end of any configuration. We enforce this by simply forbidding the $f$ and $r$ operations in the final slot.

\vskip 1mm
In~\cite{ALR05}, the authors described several pattern classes by insertion encoding and gave the following properties.
\begin{proposition}\cite[Proposition~4]{ALR05}\label{P1}
The permutations in $\S_n(312)$ are precisely those whose insertion encoding uses only the symbols $f_1$, $l_1$, $r_1$, and $m_1$.
\end{proposition}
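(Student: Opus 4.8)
The plan is to translate the insertion process into a combinatorial statement about the positions of the small entries of $\sigma$, and then read the pattern condition off directly. For $0\le k\le n$ set $P_k=P_k(\sigma):=\{\,i\in[n]:\sigma(i)\le k\,\}$, the set of positions carrying the $k$ smallest values, and call the maximal intervals of $[n]\setminus P_k$ the \emph{gaps} of $P_k$. The first and main step will be to prove, by induction on $k$ along the evolution of $\sigma$, the following dictionary: after the first $k$ insertions the slots of the partial configuration are in order-preserving bijection with the \emph{nonempty} gaps of $P_k$, and the symbol recorded at step $k+1$ is $m$, $l$, $r$, or $f$ according as the position of the entry $k+1$ lies strictly in the interior of its gap, at its left end, at its right end, or forms a singleton gap. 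In particular, the entry $k+1$ will be inserted into slot $1$ if and only if its position lies in the \emph{leftmost} nonempty gap of $P_k$, equivalently if and only if every position to the left of that gap carries a value $\le k$.

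Granting the dictionary, both implications of the proposition will follow from a one-line $312$ argument. For the forward direction I would argue the contrapositive: if the encoding of $\sigma$ uses a symbol with subscript $j\ge 2$, say at step $k+1$, then the position $c$ with $\sigma(c)=k+1$ lies in a gap of $P_k$ having a nonempty gap strictly to its left; picking a position $r$ in that left gap and a position $b$ with $r<b<c$ and $b\in P_k$ (which exists since consecutive gaps are separated by elements of $P_k$) yields $\sigma(r)>\sigma(c)>\sigma(b)$ with $r<b<c$, an occurrence of $312$. Hence every $\sigma\in\S_n(312)$ makes all of its insertions in slot $1$, i.e.\ uses only $f_1,l_1,r_1,m_1$. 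For the reverse direction, again by contrapositive, given an occurrence $\sigma(a)>\sigma(c)>\sigma(b)$ with $a<b<c$ I would set $k+1:=\sigma(c)$; then $a,c\notin P_k$ but $b\in P_k$, so $a$ and $c$ lie in distinct gaps of $P_k$, the one containing $a$ being nonempty and strictly to the left of the one containing $c$, and the dictionary then forces the entry $k+1$ into a slot of index $\ge 2$. Combining the two implications proves the statement.

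I expect the only real work to be in the first step, i.e.\ setting up the slot$\leftrightarrow$gap dictionary, and this will be a bookkeeping induction: inserting the new maximum into the slot attached to a gap $G$ either splits $G$ into the two subintervals on either side of the inserted position (the $m$ case, giving two slots), retains exactly one of these subintervals (the $l$ and $r$ cases, giving one slot), or deletes $G$ when $G$ is a single position (the $f$ case, giving no slot); one checks that this is precisely the effect on the gap structure of replacing $P_k$ by $P_{k+1}$, and that the local picture around the new entry matches the definition of the four symbols. Once this is in place the pattern-avoidance content is immediate, so the main obstacle is purely organizational rather than combinatorial. I would also recall from \cite{ALR05} that the insertion encoding is a bijection, so that ``the encoding of $\sigma$'' is well defined; the variant with a reserved free slot on the right is not needed here.
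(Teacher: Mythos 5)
Your argument is correct, but note that the paper itself offers no proof of this statement: it is quoted directly from \cite[Proposition~4]{ALR05}, and the only argument of this type written out in the paper is the short, informal one for the analogous Proposition~\ref{P2} on $\S_n(321)$, which reasons directly about which insertion letters can occur without creating the forbidden pattern. Your proposal is thus a self-contained replacement for the citation rather than a variant of an existing proof. The slot--gap dictionary you set up (after $k$ insertions the slots are exactly the maximal intervals of $[n]\setminus P_k$, with the symbol type $m,l,r,f$ determined by whether the position of the value $k+1$ is interior to its gap, at its left end, at its right end, or constitutes a singleton gap) is precisely the content of the insertion encoding made explicit, and its inductive verification is the routine bookkeeping you describe: adjoining the position of $k+1$ to $P_k$ splits, trims or deletes exactly the gap containing it, matching the slot update rules. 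Granting it, both contrapositives are complete: a subscript $\geq 2$ at step $k+1$ produces positions $r<b<c$ with $\sigma(b)\leq k<k+1=\sigma(c)<\sigma(r)$ (the inequality $\sigma(r)>\sigma(c)$ deserves the one-line remark that $r\notin P_k$ and $r\neq c$ force $\sigma(r)\geq k+2$), hence an occurrence of $312$; conversely an occurrence $\sigma(a)>\sigma(c)>\sigma(b)$ with $a<b<c$ forces, at step $\sigma(c)$, an insertion into a non-leftmost slot. Compared with the style of the paper's proof of Proposition~\ref{P2} (and of the original source), your approach trades brevity for rigor: the gap formalism makes the ``only these letters can occur'' claims checkable, and it isolates the single place where $312$-avoidance enters, namely that all insertions occur in the leftmost slot.
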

In~ \cite{ALR05}, the authors used the modification of the insertion encoding to give grammar describes for $\S_n(321)$. Here, we re-describe $\S_n(321)$ in the flavor of Proposition~\ref{P1}.

\begin{proposition}\label{P2}
The permutations in $\S_n(321)$ are precisely those whose insertion encoding uses only the symbols $m_{-1}$ and $l_{-1}$ if there is only one free slot, and uses only the symbols $f_1$, $l_1$, $l_{-1}$ and $m_{-1}$ if there are more than one free slot.
\end{proposition}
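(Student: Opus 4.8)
The plan is to run the \emph{evolution} of a permutation, i.e.\ the successive insertion of new maximum elements, inside the modified insertion encoding, in which a free slot is permanently kept at the right-hand end; thus every configuration reached during the evolution has at least one slot, ``the number of free slots'' means the number of slots in the current word, and the element inserted at step $k+1$ is $k+1$ (the current maximum). Since each permutation has a unique insertion encoding, it suffices to prove the two implications $\sigma\in\S_n(321)\Rightarrow$ the encoding of $\sigma$ has the stated form, and its converse.

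For the forward implication, I would examine step $k+1$ in the evolution of a fixed $\sigma\in\S_n(321)$, splitting on whether $k+1$ is a left-to-right maximum of the \emph{final} permutation $\sigma$. If it is, every value lying to its left in $\sigma$ is smaller, hence already placed, so its position is the left end of the leftmost empty run, that is, the left end of slot $1$; sitting at the extreme left of that slot, the insertion has type $l$ or $f$, giving $l_1$ or $f_1$, and the type is $f$ precisely when that slot is a singleton, in which case --- since the slot containing the permanent right-hand site has two or more positions --- there are at least two slots, so $f_1$ is legitimate. If $k+1$ is not a left-to-right maximum of $\sigma$, then $321$-avoidance forces every value occurring after it in $\sigma$ to be larger: a later left-to-right maximum exceeds all earlier values, while the non-left-to-right maxima of a $321$-avoider form an increasing subsequence containing $k+1$. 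Hence all positions from $k+1$'s onward, together with the phantom site, are empty and form the last slot, into which $k+1$ is inserted; not being at the right end of that slot (the phantom is), the type is $l$ or $m$, yielding $l_{-1}$ or $m_{-1}$. Reading off the cases gives exactly the asserted restriction, and one checks directly that with a single slot only $l_{-1}$ and $m_{-1}$ arise.

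For the converse I would induct on the number of insertions, maintaining a two-part invariant on the current configuration, written $S_0\,\square\,S_1\,\square\,\cdots\,\square\,S_{m-1}\,\square$, where the $S_i$ are the maximal blocks of already-placed values ($S_0$ possibly empty, $S_1,\dots,S_{m-1}$ nonempty): (i) the pattern formed by all placed values avoids $321$, and (ii) the concatenation $S_1S_2\cdots S_{m-1}$ is increasing. Under the four permitted moves the blocks change predictably: $l_1$ appends the new (largest) value to $S_0$; $l_{-1}$ appends it to $S_{m-1}$; $m_{-1}$ starts a new last block equal to that value; and $f_1$ (only when $m\ge2$) replaces $(S_0,S_1)$ by the single block given by $S_0$, the new value, and $S_1$ in that order. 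Invariant (ii) is preserved since each move either leaves $S_1\cdots S_{m-1}$ unchanged, or appends the current maximum to its right end, or deletes its leading block; and, given (ii), invariant (i) is preserved because inserting the new maximum is always safe: under $l_{-1}$ and $m_{-1}$ it lands at the very end of the word, and under $l_1$ and $f_1$ it lands immediately in front of $S_1S_2\cdots S_{m-1}$, which by (ii) is increasing, so no fresh decreasing triple can appear. When all $n$ values have been placed, $m=1$ and $S_0$ is the output permutation, so (i) says it avoids $321$.

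The main obstacle is the converse --- more precisely, hitting on the right invariant. The naive guess that each block $S_i$ stays increasing is false, since an $f_1$ move inserts the new maximum into the interior of a block; the fix is the weaker assertion (ii), that only the non-initial blocks stay jointly increasing, together with the observation that (ii) is exactly what makes the $l_1$ and $f_1$ insertions harmless for $321$-avoidance. The forward direction, once the left-to-right-maximum dichotomy is in place, is routine; the only point requiring care there is distinguishing when a slot is a singleton from when it carries the permanent right-hand site, which controls the $f_1$-versus-$l_1$ and one-slot-versus-several alternatives.
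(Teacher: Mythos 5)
Your proof is correct, but it takes a genuinely different route from the paper's. The paper argues directly at the level of single insertion operations: it observes that with one free slot the modified encoding only admits $m_{-1}$ and $l_{-1}$, asserts that the four permitted letters never create a $321$ pattern while keeping a free slot at the right end, and argues that any other letter either eliminates that final slot or forces a $321$, because a smaller placed entry lies to the right of the new maximum while a slot to its left must later receive a larger entry. You instead prove the forward inclusion by locating each new maximum through the dichotomy on left-to-right maxima of the final permutation (a left-to-right maximum must enter at the left end of slot $1$, giving $l_1$ or $f_1$; any other entry must enter the last slot, giving $l_{-1}$ or $m_{-1}$), and the reverse inclusion by induction on the evolution with the explicit invariant that the placed pattern avoids $321$ and that the blocks after the first slot concatenate to an increasing word. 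Your invariant (ii) is precisely the fact that the paper's unproved assertion ``the permitted letters create no $321$'' relies on, so your argument is more complete and self-contained, at the cost of length; the paper's contrapositive treatment of the forward direction (a forbidden letter forces a $321$ in the final permutation) is shorter and avoids the left-to-right-maximum analysis. One small wording slip on your side: when legitimizing $f_1$, the slot containing the permanent right-hand site need not have two or more positions (it may consist of that site alone); the correct point is that a slot filled completely by an $f$-insertion can never be the slot containing the never-filled right-hand site, so slot $1$ is distinct from the last slot and at least two slots are present --- which is what your argument actually needs and easily yields.
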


\begin{proof}
In this modified form, it is obviously that the only letters which may occur when only one slot is free are $m_{-1}$ and $l_{-1}$. If there are more than one free slot, insert $f_1$, $l_1$, $l_{-1}$ and $m_{-1}$ will not create a 321 pattern and also keep a free slot at the end of sequence. But, if we insert other operations, either they eliminate the free slot at the end of the figure or they will create a 321 pattern since there is a smaller element at the right side of this operation and there have at least one free slot, once an element is added to the first slot of this sequence, a $321$ pattern is created.
\end{proof}

\begin{proposition}\cite[Section~7.4]{ALR05}\label{P3}
The permutations in $\S_n(3124,4123,3142,4132)$ are precisely those whose insertion encoding uses only the symbols $m_1$, $l_1$, $r_1$ and $f_1$, but if there are two free slots then the insertion encoding can also use $f_2$.
\end{proposition}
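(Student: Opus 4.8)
For Proposition~\ref{P3}, the plan is to prove the two inclusions separately, working with the \emph{evolution} of a permutation: its (unique) insertion encoding inserts the values $1,2,\dots ,n$ in increasing order, so at the step inserting the value $k$ that value exceeds every element then present, and the slots of the current configuration are exactly the gaps that will later receive an inserted (hence larger) value; in particular every slot eventually receives at least one such value, and the slots are linearly ordered from left to right. Call an operation \emph{admissible} if it is one of $m_1,l_1,r_1,f_1$, or is $f_2$ performed when there are exactly two slots; the claim is that $\sigma\in\S_n(3124,4123,3142,4132)$ iff every operation in the encoding of $\sigma$ is admissible. I would first record two invariants of an evolution that uses only admissible operations, each checked by running through the five admissible operations: (I1) the leftmost block only gains elements, appended on its right, and nothing is ever inserted to its left; (I2) once an element lies in the rightmost block it stays there, and nothing is ever inserted to its right.

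For ``admissible encoding $\Rightarrow$ $\sigma$ avoids the four patterns'' I would argue by contradiction. Suppose $\sigma$ has an admissible encoding but contains one of the patterns at positions $i_1<i_2<i_3<i_4$. In all four patterns $b:=\sigma(i_2)$ is the smallest of the four values, $\sigma(i_1)$ exceeds the second smallest, and that second smallest value $c$ sits at $i_3$ (for $3124$ and $4123$) or at $i_4$ (for $3142$ and $4132$). Look at the step inserting $c$: at that moment $b$ is already placed, to the left of where $c$ lands, and since $\sigma(i_1)>c$ is inserted later at the position $i_1<i_2$, the configuration already has a slot to the left of $b$ (the one whose region will shrink down to position $i_1$, since insertion regions only narrow), whereas $c$ lands to the right of $b$; hence $c$ enters a slot of index $\ge 2$, and since the only admissible operation not acting on slot $1$ is $f_2$ with exactly two slots, $c$ must be inserted that way. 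But then, immediately afterwards, $c$ lies in the rightmost block, and $b$ lies either in the rightmost block or in the leftmost one: in the first case (I2) prevents any later value from landing to the right of $b$, in the second case (I1) prevents any later value from landing to its left. For $3142$ and $4132$ this contradicts the required placement of, respectively, $\sigma(i_3)$ (which must land strictly between $b$ and $c$) and $\sigma(i_1)$ (which must land to the left of $b$); for $3124$ and $4123$ it already contradicts (I2), since $\sigma(i_4)>c$ must land to the right of $c$.

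For the converse I would prove the contrapositive: if the encoding of $\sigma$ contains a non-admissible operation, say at the step inserting the value $k$, then $\sigma$ contains one of the four patterns. Such an operation enters a slot of index $j\ge 2$ by one of $m_j,l_j,r_j$, or by $f_j$ with $j\ge 3$, or by $f_2$ while there are at least three slots. In every case the configuration at that step contains the leftmost slot, a nonempty block $B_1$ lying between the first two slots and to the left of where $k$ lands, and a further slot $S$ lying to the right of $B_1$ which sits either to the right of where $k$ lands (cases $m_j,l_j$, and $f_2$ with $\ge 3$ slots) or to the left of it (cases $r_j$, and $f_j$ with $j\ge 3$). Picking a value $v_1$ destined for the leftmost slot, an arbitrary element $b_1\in B_1$, the value $k$, and a value $v'$ destined for $S$, and reading these four in position order, one gets a $3124$ or $4123$ occurrence when $v'$ lands to the right of $k$ and a $3142$ or $4132$ occurrence when it lands to its left: indeed $b_1$ is the smallest of the four, $k$ the second smallest (it beats $b_1$ but is beaten by the two future values $v_1,v'$), and $v_1,v'$ the two largest.

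I expect the bookkeeping in the first direction to be the main obstacle: one must set up (I1) and (I2) correctly, justify that ``a slot already lies to the left of $b$ when $c$ is inserted'' really follows from the \emph{eventual} placement of $\sigma(i_1)$, and carry out the $b$-in-leftmost-block versus $b$-in-rightmost-block case split needed for the patterns $3142$ and $4132$. The converse direction is, by contrast, a single uniform and essentially mechanical case analysis over the types of non-admissible operation.
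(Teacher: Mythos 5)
Your proof is correct, but note that the paper itself does not prove Proposition~\ref{P3}: it is imported from \cite[Section~7.4]{ALR05}, and only the analogous characterization of $\S_n(321)$ (Proposition~\ref{P2}) is argued in the text, by a brief inspection of which insertions stay in the class and which create a forbidden pattern. Your converse direction is exactly that style of argument, carried out carefully: a non-admissible step (an operation $m_j,l_j,r_j$ with $j\ge 2$, $f_j$ with $j\ge 3$, or $f_2$ with at least three slots) yields a future element of slot $1$, an element of the nonempty block between the first two slots, the inserted value $k$, and a future element of a second slot to the right of that block, and these four positions realize one of $3124,4123,3142,4132$ in both placements of the extra slot; this checks out. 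Your forward direction takes a slightly different route from the usual ``admissible insertions preserve avoidance'' closure check: you trace a hypothetical occurrence and show that the insertion of its second-smallest value $c$ must land in a slot of index $\ge 2$, hence be an $f_2$ with exactly two slots, after which (I2) (and, in your spare case, (I1)) blocks the remaining letters of the pattern. This is sound: the slot left of $b$ exists because $\sigma(i_1)>c$ is inserted later and to the left of $b$ and slots only get subdivided, and the dichotomy ``$b$ lies in the leftmost or rightmost block'' holds because after the $f_2$ only one slot remains (in fact $b$ sits between the two slots when $c$ is inserted, so it necessarily joins the rightmost block and the leftmost-block case is vacuous, though your handling of it is also valid). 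Two cosmetic points: (I1) and (I2) hold for \emph{every} evolution, not only admissible ones, since insertions occur only in slots and all slots lie strictly between the leftmost and rightmost blocks, so admissibility enters only through the ``$f_2$ with exactly two slots'' step; and it would be worth stating explicitly why two consecutive slots are always separated by a nonempty block (each $m$-step interposes the inserted value), since the converse uses that $B_1\neq\varnothing$.
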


%


\subsection{Laguerre histories as permutation encodings}

%
A \emph{Laguerre history} (resp. \emph{restriced Laguerre history}) of length $n$ is a pair of $(\mathbf{s},\mathbf{p})$, where $\mathbf{s}$ is a 2-Motzkin path $s_1\ldots s_n$ and $\mathbf{p}=(p_1,\ldots,p_n)$ with $0\leq p_i\leq h_{i-1}(\mathbf{s})$ (resp. $0\leq p_i\leq h_{i-1}(\mathbf{s})-1$ if $s_i=L_r$ of D) with $h_0(\mathbf{s})=0$. Let $\mathcal{LH}_n$ (resp. $\mathcal{LH}^*_n$) be the set of Laguerre histories (resp. restriced Laguerre histories) of length $n$. There are several well-known bijections between $\S_n$ and $\mathcal{LH}^*_n$ and $\mathcal{LH}_{n-1}$, see~\cite{DV94, CSZ}.
\subsection{Fran\c con-Viennot bijection}
We recall a version of Fran\c con and Viennot's bijection
$\psi_{FV}:\S_{n+1}\rightarrow\mathcal{LH}_n$. Given $\sigma\in\S_{n+1}$, the Laguerre history $\psi_{FV}(\sigma)=(\mathbf{s},\mathbf{p})$ is defined as follows:
\begin{align}
s_i=\begin{cases}
\textrm{U}&\textrm{if}\;i\in\mathrm{Val}\,\sigma\\
\textrm{D}&\textrm{if}\;i\in\mathrm{Pk}\,\sigma\\
\textrm{L}_b&\textrm{if}\;i\in\mathrm{Da}\,\sigma\\
\textrm{L}_r&\textrm{if}\;i\in\mathrm{Dd}\,\sigma
\end{cases}
\end{align}
and $p_i=(\bac)_i\,\sigma$ for $i=1,\ldots,n.$
\subsection{Restricted Fran\c con-Viennot bijection}
We recall a restricted version of Fran\c con and Viennot's bijection $\phi_{FV}: \S_n\rightarrow\mathcal{LH}_n^*.$ Given $\sigma\in\S_n$, the Laguerre history $(\mathbf{s},\mathbf{p})$ is defined as follows:
\begin{align}
s_i=\begin{cases}
\textrm{U}&\textrm{if}\;i\in\mathrm{Lval}\,\sigma\\
\textrm{D}&\textrm{if}\;i\in\mathrm{Lpk}\,\sigma\\
\textrm{L}_b&\textrm{if}\;i\in\mathrm{Lda}\,\sigma\\
\textrm{L}_r&\textrm{if}\;i\in\mathrm{Ldd}\,\sigma
\end{cases}
\end{align}
and $p_i=(\bca)_i\,\sigma$ for $i=1,\ldots,n.$
\section{Proof of Theorem~\ref{Apqcatalan}}\label{Sec:proofA}
A \emph{Motzkin path}  of length  $n$ is a sequence of points 
$\omega:=(\omega_0, \ldots, \omega_n)$  in the integer plane 
$\mathbb{Z}\times\mathbb{Z}$  such that 
\begin{itemize}
\item $\omega_0=(0,0)$ and $\omega_n=(n,0)$,
\item $\omega_i-\omega_{i-1}\in \{(1,0), (1, 1), (1,-1)\}$,
\item $\omega_i:=(x_i, y_i)\in \mathbb{N}\times \mathbb{N}$ for $i=0,\ldots, n$.
\end{itemize}
In other words, a Motzkin path of length $n\geq 0$ is a lattice path in the 
right quadrant $\mathbb{N}\times \mathbb{N}$ starting at $(0,0)$ and  ending at $(n,0)$, each step $s_i=\omega_i-\omega_{i-1}$ is a 
rise $\mathsf{U}=(1,1)$, fall
$\mathsf{D}=(1,-1)$ or level $\mathsf{L}=(1,0)$. A 2-Motzkin path is a Motzkin path with two types of level-steps $\mathsf{L}_b$ and $\mathsf{L}_r$.
Let $\mathcal{MP}_n$ be the set of 2-Motzkin paths of 
length $n$.
Clearly we can  identify 
 2-Motzkin paths of length $n$ with words $w$ on 
 $\{\mathsf{U, L_b, L_r, D}\}$ of length $n$ such that all prefixes of $w$ 
 contain no more $\mathsf{D}$'s than $\mathsf{U}$'s and the number of $\mathsf{D}$'s equals the number of $\mathsf{D}'s$.
The height of a step $s_i$ is the ordinate of the starting point $\omega_{i-1}$.

Let  $\mathbf{a}=(a_i)_{i\geq 0}$,
$\mathbf{b}=(b_i)_{i\geq 0}$, $\mathbf{b'}=(b'_i)_{i\geq 0}$  and $\mathbf{c}=(c_i)_{i\geq 1}$  be  four  sequences of indeterminates; we will work in the ring $\mathbb{Z}[[\mathbf{a}, \mathbf{b}, \mathbf{b'}, \mathbf{c}]]$. 
To each Motzkin path $\omega$ we assign a weight $W(\omega)$ that is the product of the weights for the individual steps, where 
an up-step (resp. down-step) at height 
$i$ gets weight  $a_i$ (resp. $c_i$), and 
a level-step of $\mathsf{L}_b$ (resp. $\mathsf{L}_r$) at height $i$ gets weight 
$b_i$ (resp.  $b'_i$). 
The following  result of 
 Flajolet~\cite{Fla80} is well-known.
\begin{lemma}[Flajolet]\label{flajolet} We have 
$$
\sum_{n=0}^{\infty}\left(\sum_{\omega\in \mathcal{MP}_n}W(\omega)\right)x^n
=\cfrac{1}{1-(b_0+b_0')x-\cfrac{a_0c_1x^2}{1-(b_1+b_1')x-\cfrac{a_1c_2x^2}{1-(b_2+b_2')x-\cdots}}}.
$$
\end{lemma}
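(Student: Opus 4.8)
The plan is to derive the continued fraction from the classical first-return decomposition of lattice paths, so that it emerges as the solution of a hierarchy of functional equations; this is essentially Flajolet's original argument, and its only real content beyond bookkeeping is a convergence check. First I would introduce, for every height $k\geq 0$, the generating function
$$
F_k(x)=\sum_{n\geq 0}\Bigl(\sum_{\omega}W(\omega)\Bigr)x^n,
$$
where the inner sum ranges over all lattice paths of length $n$ with steps in $\{\mathsf{U},\mathsf{D},\mathsf{L}_b,\mathsf{L}_r\}$ that start and end at height $k$ and never go below height $k$, each carrying the weight $W$ read off the absolute heights of its steps. Since a path that stays weakly above height $0$ and returns to $0$ is precisely a $2$-Motzkin path, the left-hand side of the lemma is $F_0(x)$.

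Next I would set up the functional equation by decomposing a nonempty path $\omega$ counted by $F_k$ according to its first step. If this step is $\mathsf{L}_b$ or $\mathsf{L}_r$ at height $k$ (weights $b_k$ and $b'_k$), it is followed by an arbitrary path counted by $F_k$. Otherwise the first step is $\mathsf{U}$ at height $k$ (weight $a_k$); after it the path stays weakly above height $k+1$ until it first returns to height $k+1$, which contributes an arbitrary factor counted by $F_{k+1}(x)$, then it drops to height $k$ by a $\mathsf{D}$-step starting at height $k+1$ (weight $c_{k+1}$), and finally it is followed by an arbitrary path counted by $F_k$. Translating this into generating functions gives
$$
F_k(x)=1+(b_k+b'_k)\,x\,F_k(x)+a_k c_{k+1}\,x^2\,F_{k+1}(x)\,F_k(x),
$$
and solving for $F_k$ yields $F_k(x)=\bigl(1-(b_k+b'_k)x-a_k c_{k+1}x^2 F_{k+1}(x)\bigr)^{-1}$.

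Finally I would iterate this relation: substituting it into itself $N$ times writes $F_0(x)$ as the continued fraction of the lemma unrolled to depth $N$, with $F_N(x)$ sitting at the bottom. The step needing care is the passage $N\to\infty$. For this I would observe that within the expansion of $F_0(x)$ the factor $F_{N}(x)$ enters only through terms of order at least $x^{2N}$ — equivalently, a path of length $n$ reaches height at most $\lfloor n/2\rfloor$ — so replacing the tail below depth $N$ by anything whatsoever leaves the coefficients of $x^0,\dots,x^{2N-1}$ in $F_0(x)$ unchanged. Hence the truncations form a Cauchy sequence in the $x$-adic topology on $\mathbb{Z}[[\mathbf{a},\mathbf{b},\mathbf{b'},\mathbf{c}]][[x]]$ whose limit is $F_0(x)$, and this limit is by definition the value of the right-hand side. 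I expect this $x$-adic stabilization to be the only genuine obstacle; the path decomposition and the algebra are routine.
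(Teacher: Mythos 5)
Your proof is correct: the paper gives no proof of this lemma (it simply cites Flajolet), and your first-return decomposition of the weighted paths, the functional equation $F_k=1+(b_k+b'_k)xF_k+a_kc_{k+1}x^2F_{k+1}F_k$, and the $x$-adic stabilization argument are exactly Flajolet's standard argument for this statement. The only slip is a harmless one of phrasing: after the initial $\mathsf{U}$-step the path stays weakly above height $k+1$ until it first returns to height $k$ (not $k+1$), which is what your formula actually encodes.
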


For $\sigma\in\S_n(321)$, we append $\sigma(0)=0,\,\sigma(n+1)=n+1$. And 
 we also introduce a generalized $(\bac)$ statistic $\widehat{(\bac)}$ on $\sigma$ by 
$$
\widehat{(\bac)}_i\sigma:=\#\{j: i<j\leq n\; \text{and} \;\sigma(j)<\sigma(i)<\sigma(j+1)\}
$$
and 
$$
\widehat{(\bac)}=\sum_{i=1}^n\widehat{(\bac)}_i.
$$

Let
\[
C_n(p, q, t, u, w):=\sum_{\pi\in\S_n(321)}p^{\widehat{(\bac)}\,\pi}q^{(\cab)\,\pi}t^{\des\,\pi}u^{\lda\,\pi}w^{\lval\,\pi},
\]
then we have the following Theorem.

\begin{theorem}\label{thm:A1}
We have
\begin{align}
&\sum_{n\geq0}
C_n(p,q,t,u,w)z^n=\\
&\hspace{0.5cm}\cfrac{1}{1-u\cdot z-\cfrac{wtp\cdot z^{2}}{1-(p+q)u\cdot z-\cfrac{
 wtp^2q\cdot z^{2}}{1-(p^2+q^2)u\cdot z-\cfrac{wtp^3q^2\cdot z^2}{1-(p^3+q^3)u\cdot z-\cfrac{wtp^4q^3\cdot z^2}{1-\cdots}}}}}
\end{align}
\end{theorem}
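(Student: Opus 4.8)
The plan is to realize $\S_n(321)$ as a weighted set of $2$-Motzkin paths and then read the continued fraction off Flajolet's Lemma~\ref{flajolet}.

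\emph{From $321$-avoiders to paths.} First I would use Proposition~\ref{P2}: a permutation $\sigma\in\S_n(321)$ is determined by its (modified) insertion word $w_1\cdots w_n$, whose letters are $m_{-1}$ or $l_{-1}$ whenever a single slot is free, and one of $f_1,l_1,l_{-1},m_{-1}$ whenever at least two slots are free. I would translate this word into a lattice path $\omega(\sigma)$ of length $n$ via $m_{-1}\mapsto\mathsf{U}$, $f_1\mapsto\mathsf{D}$, $l_1\mapsto\mathsf{L}_b$, $l_{-1}\mapsto\mathsf{L}_r$, with the $i$-th step placed at height (number of free slots before the $i$-th insertion)$\,-1$. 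Since $m_{-1}$ creates a slot, $f_1$ deletes one, $l_{\pm1}$ preserves the count, and the evolution starts and ends with exactly one (the retained trailing) free slot, $\omega(\sigma)$ is a $2$-Motzkin path from $(0,0)$ to $(n,0)$; the restriction in Proposition~\ref{P2} is exactly that no $\mathsf{D}$ and no $\mathsf{L}_b$ occurs at height $0$. As every such path arises from a unique valid word, this gives a bijection between $\S_n(321)$ and the set $\mathcal{P}_n\subseteq\mathcal{MP}_n$ of paths with no blue level step at height $0$.

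\emph{The statistic--weight dictionary.} Next I would follow the insertions $1,2,\dots,n$ and record how each one changes the five statistics, the goal being to show that each change depends only on the type and height $h$ of the corresponding step. The dictionary I expect to prove is: an $\mathsf{U}$-step at height $h$ adds $h+1$ to $\widehat{(\bac)}$, $h$ to $(\cab)$ and $1$ to each of $\des$ and $\lval$; an $\mathsf{L}_b$-step at height $h$ adds $h$ to $\widehat{(\bac)}$ and $1$ to $\lda$; an $\mathsf{L}_r$-step at height $h$ adds $h$ to $(\cab)$ and $1$ to $\lda$; and a $\mathsf{D}$-step contributes nothing. Two convenient starting observations are that in a $321$-avoider every descent $\sigma(i)>\sigma(i+1)$ forces $\sigma(i+1)$ to be a left valley, so that $\des\sigma=\lval\sigma$ is the number of $\mathsf{U}$-steps of $\omega(\sigma)$, and that $321$-avoiders have no left double descents, so that $\lda\sigma$ is the number of level steps. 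Granting the dictionary, I would weight an $\mathsf{U}$-step at height $h$ by $a_h=wtp^{h+1}q^{h}$, a $\mathsf{D}$-step by $c_h=1$, an $\mathsf{L}_b$-step at height $h$ by $b_h=p^{h}u$, and an $\mathsf{L}_r$-step at height $h$ by $b_h'=q^{h}u$, with $b_0=0$ to kill the forbidden blue step at height $0$. Then $C_n(p,q,t,u,w)=\sum_{\omega\in\mathcal{MP}_n}W(\omega)$ for these weights, and since $b_0+b_0'=u$, $b_h+b_h'=(p^h+q^h)u$ for $h\ge1$, and $a_hc_{h+1}=wtp^{h+1}q^{h}$, Lemma~\ref{flajolet} produces precisely the continued fraction of the statement.

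\emph{The hard part.} The main obstacle is proving the dictionary for the two vincular statistics $\widehat{(\bac)}$ and $(\cab)$. The key point is that when the new maximum $k$ is inserted, all of $k+1,\dots,n$ are inserted later and are larger than $k$, so $k$ can only ever play the role of the largest letter in a $2$-$13$- or $31$-$2$-occurrence; hence the new occurrences created by this insertion are governed entirely by the free slots flanking the insertion point, and I would set up an explicit bijection between those occurrences and the free slots on the appropriate side, giving the counts $h+1$ and $h$. Making this rigorous requires a careful description of the nested slot structure of a partially built $321$-avoider, and it is here that the boundary conventions $\sigma(0)=0$ and $\sigma(n+1)=n+1$ enter — indeed the hatted statistic $\widehat{(\bac)}$ is defined exactly so that the count comes out to $h+1$ rather than $h$ — together with the degenerate behaviour at height $0$. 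Once the dictionary is secured, assembling the bijection of the first step and invoking Flajolet's lemma is routine.
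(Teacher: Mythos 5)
Your overall architecture — Proposition~\ref{P2} to turn a $321$-avoider into a word over $\{m_{-1},l_{-1},l_1,f_1\}$, read the word as a $2$-Motzkin path with height equal to (number of free slots)$-1$, then invoke Flajolet's Lemma~\ref{flajolet} — is essentially the paper's proof (the paper packages the same correspondence as the restriction $\Phi_1$ of the Fran\c con--Viennot map to $\S_n(321)$, with path diagrams of type A in place of your two-coloured level steps). The problem is in what you call the hard part: your statistic--weight dictionary is asserted as a per-insertion statement, and the justification you sketch cannot prove it. A value $k$ inserted by an $m$-insertion becomes a left \emph{valley} of the final permutation, so both of its neighbours are larger than $k$; consequently $k$ is \emph{never} the largest letter of a $2$-$13$ or $31$-$2$ occurrence, and at the moment $k$ is inserted no occurrence has all three of its letters present. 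Concretely, for $\sigma=3142$ the insertion word is $m_{-1}m_{-1}f_1f_1$, giving the path $\mathsf{U}(0)\mathsf{U}(1)\mathsf{D}(2)\mathsf{D}(1)$; your dictionary puts all of $\widehat{(\bac)}\,\sigma=3$ on the two $\mathsf{U}$-steps ($1+2$) and nothing on the $\mathsf{D}$-steps, but the three occurrences are $(3,1,4)$, $(3,2,5)$, $(4,2,5)$ (with the appended $5$): their largest letters are $4,5,5$, so one is ``created'' at the $f_1$-insertion of $4$ (a $\mathsf{D}$-step) and two involve the appended boundary letter and are created by no insertion at all. So the bijection you propose between new occurrences and the flanking free slots, with counts $h+1$ and $h$ at an $m$-insertion, does not exist; the increments you claim for $\mathsf{U}$- and $\mathsf{D}$-steps are not the actual increments under any insertion-time accounting.

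What saves your final answer is an aggregate identity you never state or prove: in any Motzkin path the $\mathsf{U}$-steps at height $h$ are equinumerous with the $\mathsf{D}$-steps at height $h+1$, so transferring the weight $p^{h+1}t$ from each $\mathsf{D}$-step to its matching $\mathsf{U}$-step leaves every path weight unchanged, and only the products $a_hc_{h+1}$ enter the J-fraction. The correct per-step dictionary (the paper's remark after Definition~\ref{def1}) attributes to each inserted value its contribution as the \emph{first} letter of the vincular patterns: an $m$-insertion (valley) at height $h$ contributes $(\cab)=h$, $\widehat{(\bac)}=0$; an $f$-insertion (peak) at height $h$ contributes $\widehat{(\bac)}=h$, $(\cab)=0$; the insertions $l_1$ and $l_{-1}$ contribute $(h,0)$ and $(0,h)$ respectively, each marking a left double ascent. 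With these weights ($a_h=q^hw$, $c_h=p^ht$, level weights $p^hu$ and $q^hu$, only $q^0u$ allowed at height $0$) Flajolet's lemma gives exactly the stated continued fraction; your weights then follow by the transfer just described. As written, though, your proof of the key lemma would fail, so the argument has a genuine gap that must be repaired by re-attributing the statistics as above (or by proving the aggregate equivalence separately).
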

It is easy to see that Theorem~\ref{thm:A1} is a generalization of Theorem~\ref{Apqcatalan}.
In this section we will give a proof for Theorem~\ref{thm:A1}.

%
%
\begin{Def}\label{def1}
A path of diagram of type A of length $n$ is a pair $(\omega',\xi')$, where $\omega'$ is a Motzkin path of length $n$ with conditions that at height $0$, the type of steps can only be $\mathrm{U}$ or $\mathrm{L}_b$ and in other heights, the type of steps can only be $\mathrm{D}$, $\mathrm{L}_b$ and $\mathrm{U}$. $\xi'=(\xi'_1,\xi'_2,\ldots,\xi'_n)$ is an integer sequence satisfying that if the $k$-th step of $\omega'$ is at height $h$ and the step is $\mathrm{D}$, then $\xi_k'=h-1$. If the $k$-th step at height $h$ is $\mathrm{L}_b$ then $\xi_k'=0$ or $\xi_k'=h$ if ($h\geq1$). For the other cases, $\xi_k'=0$.
We denote by $\mathcal{PDA}_n$ the set of path diagrams of type A of length $n$.
\end{Def}

\begin{remark}
Following Proposition~\ref{P2},  each $\sigma\in\S_n(321)$ can be constructed by inserting element one by one from 1 to $n$, then we obtain the following observations for $(\omega',\xi')=\phi_{FV}(\sigma)$.

\begin{enumerate}
\item When the $k$-th step of $\omega'$ is at height $h=0$, then 
$$\widehat{(\bac)}_k\sigma={(\cab)}_k\sigma =0.$$

\item When the $k$-th step of $\omega'$ is $\mathrm{D}$ step at height $h\geq 1$, then 
$$\widehat{(\bac)}_k\sigma=h \,\,\text{and}\,\,{(\cab)}_k\sigma =0.$$

\item When the $k$-th step of $\omega'$ is $\mathrm{L}_b$ step at height $h\geq 1$, then 
$$\widehat{(\bac)}_k\sigma=\xi_k' \,\,\text{and}\,\,{(\cab)}_k\sigma =h-\xi_k'.$$

\item When the $k$-th step of $\omega'$ is $\mathrm{U}$ step at height $h\geq 1$, then 
$$\widehat{(\bac)}_k\sigma=0 \,\,\text{and}\,\,{(\cab)}_k\sigma =h.$$

\end{enumerate}
\end{remark}

If we restrict  $\phi_{FV}$ on $\S_n(321)$ and denote as $\Phi_1$ for the restriction, then we have the following result.

\begin{lemma}
$\Phi_1$ is a bijection from $\S_n(321)$ to $\mathcal{PDA}_n$.
\end{lemma}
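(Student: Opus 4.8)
The plan is to show that $\Phi_1 = \phi_{FV}|_{\S_n(321)}$ lands in $\mathcal{PDA}_n$, is injective, and is surjective; injectivity is essentially free since $\phi_{FV}$ is already a bijection $\S_n \to \mathcal{LH}_n^*$, so the real content is identifying the image of $\S_n(321)$ with the path diagrams of Definition~\ref{def1}.

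First I would verify $\Phi_1(\S_n(321)) \subseteq \mathcal{PDA}_n$. Recall that under $\phi_{FV}$ the step $s_i$ is determined by whether $i$ is an Lval, Lpk, Lda, or Ldd of $\sigma$, and $p_i = (\cab)_i\,\sigma$. Using Proposition~\ref{P2}, a $321$-avoiding permutation is built by successive insertions of a new maximum using only $f_1, l_1, l_{-1}, m_{-1}$ (with $m_{-1}, l_{-1}$ only once one free slot remains). I would translate this insertion-encoding restriction into a restriction on the Laguerre history: the point is that a $\mathrm{D}$ step or an $\mathrm{L}_r$ step can only occur "at the right end", which forces (a) no $\mathrm{L}_r$ steps at all except trivial ones, hence level steps are $\mathrm{L}_b$, (b) at height $0$ only $\mathrm{U}$ or $\mathrm{L}_b$ steps, and (c) the value constraint $\xi_k' \in \{0, h\}$ on an $\mathrm{L}_b$ step at height $h$, $\xi_k' = h-1$ forced on a $\mathrm{D}$ step, and $\xi_k' = 0$ otherwise — exactly Definition~\ref{def1}. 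The cleanest route is to track, during the insertion process of Proposition~\ref{P2}, the quantity $(\cab)_k\,\sigma$ and the height $h_{k-1}$ of the path, reading off from each of the four allowed insertion types which Motzkin step it produces and what the admissible values of $p_k = (\cab)_k$ are; this is the computation already summarized in the Remark after Definition~\ref{def1}, which I would invoke or reprove.

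For surjectivity I would argue that $\phi_{FV}^{-1}$ sends a path diagram $(\omega', \xi') \in \mathcal{PDA}_n$ back to a permutation $\sigma$ whose insertion encoding uses only $f_1, l_1, l_{-1}, m_{-1}$ — i.e. $\sigma \in \S_n(321)$ by Proposition~\ref{P2}. Since the conditions defining $\mathcal{PDA}_n$ were obtained precisely by transcribing the insertion-encoding restriction through $\phi_{FV}$, this is the reverse of the previous paragraph: given the combinatorial constraints on $\omega'$ (step types allowed at each height) and on $\xi'$, each step of the inverse Franc̦on–Viennot construction corresponds to one of the four allowed insertions, and no forbidden insertion can arise. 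Injectivity then follows immediately because $\phi_{FV}$ is a bijection and $\Phi_1$ is its restriction to a subset.

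I expect the main obstacle to be the bookkeeping in the forward direction: precisely matching the insertion-encoding description of $\S_n(321)$ (which talks about slots, free slots, and positions from the right) with the Franc̦on–Viennot data (step type from the peak/valley/double-ascent/double-descent classification, and $p_i = (\cab)_i$ from the left). In particular one must be careful about the "more than one free slot" versus "one free slot" dichotomy in Proposition~\ref{P2} and check that it corresponds correctly to positive height versus the descending portion of the path, and that the two admissible values $0$ and $h$ for $\xi_k'$ on an $\mathrm{L}_b$ step genuinely exhaust the possibilities coming from $l_1$ versus $m_{-1}$ insertions. Once the Remark's four-case analysis is granted, the rest is routine.
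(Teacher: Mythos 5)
Your proposal is correct and takes essentially the same route as the paper: the paper's entire proof is to invoke Proposition~\ref{P2} and compare the definition of $\phi_{FV}$ with Definition~\ref{def1}, and your outline simply makes explicit the identification of $\Phi_1(\S_n(321))$ with $\mathcal{PDA}_n$, the free injectivity inherited from $\phi_{FV}$, and the reverse translation for surjectivity (the four-case bookkeeping you defer to is exactly the unproved Remark after Definition~\ref{def1}). The only slip is notational: the paper's $\phi_{FV}$ carries the labels $p_i=(\bca)_i\,\sigma$ rather than $(\cab)_i\,\sigma$, which does not affect the structure of the argument.
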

\begin{proof}
By Proposition~\ref{P2} and comparing the definition of $\phi_{FV}$ with Definition~\ref{def1}, we have the result.
\end{proof}
\emph{Proof of Theorem~\ref{thm:A1}}. For $\pi\in\S_n(321)$, we derive that
\begin{align}
p^{\widehat{(\bac)}\,\sigma}q^{(\cab)\,\sigma}t^{\des\,\sigma}u^{\lda\,\sigma}w^{\lval\,\sigma}=\prod_{i=1}^{n}W(s_i,\xi_i')=W((\omega',\xi')),
\end{align}
with $h$ being the height of the steps $s_i$,
\begin{align}\label{weight:dh}
W(s_i,  \xi_i')
=\begin{cases}
q^h\cdot w&\; \textrm{if}\quad s_i=U;\\
p^{h}\cdot t&\; \textrm{if}\quad s_i=D;\\
p^{\xi_i'}\cdot q^{h-\xi_i'}\cdot u&\; \textrm{if}\quad s_i=L_b.
\end{cases}
\end{align}
Therefore, the corresponding polynomial and weights become
\begin{align}
C_n(p, q, t,u, w)=&\sum_{\sigma\in\S_n(321)}p^{\widehat{(\bac)}\,\sigma}q^{(\cab)\,\sigma}t^{\des\,\sigma}u^{\lda\,\sigma}w^{\lval\,\sigma}\\
=&\sum_{(\omega',\,\xi')\in\mathcal{PDA}_{n}}W((\omega',\xi')).
\end{align}
By Lemma~\ref{flajolet}, we derive the corresponding continued fraction for the generating function of $C_n(p,q, t,u,w)$.


\section{Proof of Theorem~\ref{Bpqcatalan}}\label{Sec:proofB}
We give the following definition. Let
\[
\bB_n(p, q, t, u, v, w):=\sum_{\sigma\in\S_{n+1}(3124, 4123, 3142, 4132)}p^{(\bac)\,\sigma}q^{(\cab)\,\sigma}t^{\des\,\sigma}u^{\da\,\sigma}v^{\dd\,\sigma}w^{\val\,\sigma}.
\]

\begin{theorem}\label{thm:B1}
\begin{align}\label{cf-pqNaraB}
\sum_{n=0}^{\infty}\bB_n(p, q, t,u, v, w)z^{n}=\cfrac{1}{1-(u+tv)z-\cfrac{(p+q)twz^2}{1-p(u+tv)z-\cfrac{p^3twz^2}{1-p^2(u+tv)z-\cfrac{p^5twz^2}{\ddots}}}}.
\end{align}

\end{theorem}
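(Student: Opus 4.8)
The plan is to prove the continued fraction identity \eqref{cf-pqNaraB} along the same lines as Theorem~\ref{thm:A1}: encode the permutations in $\S_{n+1}(3124,4123,3142,4132)$ as labelled Motzkin paths carrying the six statistics as a monomial weight, and then read off the continued fraction from Flajolet's Lemma~\ref{flajolet}. To this end I would introduce, in analogy with Definition~\ref{def1}, a family of \emph{path diagrams of type B} of length $n$, namely pairs $(\mathbf{s},\mathbf{p})$ where $\mathbf{s}=s_1\cdots s_n$ is a $2$-Motzkin path subject to step restrictions at each height dictated by Proposition~\ref{P3}, and $\mathbf{p}=(p_1,\dots,p_n)$ is a label sequence in which every $p_i$ is forced to equal the height $h_{i-1}(\mathbf{s})$ of its step, except that a $\mathsf{D}$-step leaving height $1$ may carry either label $p_i=0$ or $p_i=1$. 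This single binary choice is precisely what will account for the factor $p+q$ in the first numerator of \eqref{cf-pqNaraB}.

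The first step is to promote $\psi_{FV}$ to a bijection. Using the insertion-encoding characterisation of $\S_{n+1}(3124,4123,3142,4132)$ from Proposition~\ref{P3} — the letters $m_1,l_1,r_1,f_1$ are always available, and $f_2$ is available exactly when the current configuration has two free slots — I would trace the evolution of $\sigma$ under successive insertions of a new maximum against the step-by-step reading of $\psi_{FV}(\sigma)=(\mathbf{s},\mathbf{p})$, matching the four insertion types with the four step types $\mathsf{U},\mathsf{D},\mathsf{L}_b,\mathsf{L}_r$, the number of free slots with the height, and the exceptional availability of $f_2$ at two free slots with the exceptional label freedom at a $\mathsf{D}$-step from height $1$. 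This shows that $\psi_{FV}$ restricts to a bijection between $\S_{n+1}(3124,4123,3142,4132)$ and the set of path diagrams of type B of length $n$.

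The second step — which I expect to be the main obstacle — is the weight bookkeeping, in the spirit of the weight formula~\eqref{weight:dh}. For $\sigma$ in the class with $\psi_{FV}(\sigma)=(\mathbf{s},\mathbf{p})$ one has $(\bac)_i\,\sigma=p_i$ by the very definition of $\psi_{FV}$, so I must establish: (i) $p_i$ equals the height of $s_i$ at every step except a $\mathsf{D}$-step from height $1$; (ii) the delicate identity $(\cab)_i\,\sigma=0$ for all $i$, except at a $\mathsf{D}$-step from height $1$ carrying label $p_i=0$, where $(\cab)_i\,\sigma=1$; and (iii) that $\val\,\sigma$, $\da\,\sigma$, $\dd\,\sigma$ are the numbers of $\mathsf{U}$, $\mathsf{L}_b$, $\mathsf{L}_r$ steps and $\des\,\sigma$ the number of $\mathsf{D}$ steps plus the number of $\mathsf{L}_r$ steps. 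Granting these, the contribution of a step at height $h$ to $p^{(\bac)\sigma}q^{(\cab)\sigma}t^{\des\sigma}u^{\da\sigma}v^{\dd\sigma}w^{\val\sigma}$ is $p^hw$ for $\mathsf{U}$, $p^hu$ for $\mathsf{L}_b$, $p^htv$ for $\mathsf{L}_r$, $p^ht$ for $\mathsf{D}$ when $h\ge 2$, and, after summing over the two admissible labels, $(p+q)t$ for $\mathsf{D}$ when $h=1$. The crux is (ii): it requires carefully controlling how each new insertion of a maximum permitted by Proposition~\ref{P3} modifies the already-recorded values $(\bac)_j$ and $(\cab)_j$, and disentangling the off-by-one between permutations of length $n+1$ and Laguerre histories of length $n$ together with the boundary conventions used to define peaks, valleys and descents.

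Finally, combining the two steps gives
\[
\bB_n(p,q,t,u,v,w)=\sum_{(\mathbf{s},\mathbf{p})}\;\prod_{i=1}^{n}W(s_i,p_i),
\]
the sum over all path diagrams of type B of length $n$, with $W$ the step weights just computed. Setting $a_h=p^hw$, $b_h=p^hu$, $b'_h=p^htv$, $c_1=(p+q)t$ and $c_h=p^ht$ for $h\ge 2$, Flajolet's Lemma~\ref{flajolet} turns this into the continued fraction with $b_h+b'_h=p^h(u+tv)$ on the diagonal, $a_0c_1=(p+q)tw$ as the first numerator and $a_hc_{h+1}=p^{2h+1}tw$ for $h\ge 1$ thereafter, which is exactly \eqref{cf-pqNaraB}.
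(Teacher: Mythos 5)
Your proposal is correct and follows essentially the same route as the paper: restrict the Fran\c con--Viennot map $\psi_{FV}$ to the class using Proposition~\ref{P3}, land on path diagrams of type B whose labels are forced to the height except for the binary choice at a $\mathsf{D}$-step leaving height $1$ (which produces the $p+q$), and apply Lemma~\ref{flajolet}. The only cosmetic difference is that you attach the descent weight $t$ to $\mathsf{D}$ and $\mathsf{L}_r$ steps while the paper attaches it to $\mathsf{U}$ and $\mathsf{L}_r$ steps; since a Motzkin path has equally many $\mathsf{U}$ and $\mathsf{D}$ steps (and indeed $\des=\val+\dd=\pk-1+\dd$ here), the resulting continued fraction is the same.
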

It is easy to see that Theorem~\ref{thm:B1} is a generalization of Theorem~\ref{Bpqcatalan}.
In this section we will give a proof for Theorem~\ref{thm:B1}.

\begin{Def}\label{D3}
A  path diagram of type B of length $n$ is a pair $(\omega'',\,\xi'')$, where $\omega''$ is a Motzkin path of length $n$, $\xi''=(\xi''_1,\ldots,\xi''_n)$ is an integer sequence satisfying that if the $k$-th step of $\omega''$ is $\mathrm{D}$ at height $1$, then $\xi''_k=0$ or $\xi''_k=1$, otherwise $\xi''_k=h$, where $h$ is the height of the $k$-th step of $\omega''$.
We denote by $\mathcal{PDB}_n$ the set of path diagrams of type B of length $n$.
\end{Def}

\begin{remark}
Following Proposition~\ref{P3},  each $\sigma\in\S_n(3124, 4123, 3142, 4132)$ can be constructed by inserting element one by one from 1 to $n$, then we obtain that if the $k$-th step of $\omega''$ is $\mathrm{D}$ step at height $h= 1$, then  $(\cab)_k\,\sigma =1-\xi''_k$, otherwise $(\cab)_k\,\sigma =0$.




\end{remark}


If we  restrict $\psi_{FV}$ on $\S_n(3124,4123,3142,4132)$ and denote as $\Phi_2$ for the restriction, then we have the following result.
\begin{lemma}
$\Phi_2$ is a bijection from $\S_{n+1}(3124,4123,3142,4132)$ to $\mathcal{PDB}_n$.
\end{lemma}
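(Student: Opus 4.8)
The plan is to mimic the proof of the Type A bijection lemma. First I would recall the restricted Françon--Viennot bijection $\psi_{FV}\colon\S_{n+1}\to\mathcal{LH}_n$, which is already a bijection onto the full set $\mathcal{LH}_n$ of Laguerre histories of length $n$; the only work is to check that the image of the subclass $\S_{n+1}(3124,4123,3142,4132)$ under $\psi_{FV}$ is exactly $\mathcal{PDB}_n$. So the proof reduces to two inclusions, carried out by examining which insertion encodings are permitted.

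The key step is to translate Proposition~\ref{P3} through $\psi_{FV}$. Proposition~\ref{P3} says $\sigma\in\S_{n+1}(3124,4123,3142,4132)$ iff its insertion encoding uses only $m_1,l_1,r_1,f_1$, plus $f_2$ when two free slots are present. Building $\sigma$ by successive insertion of maxima and tracking how each new maximum contributes to the $2$-Motzkin step $s_i$ and the label $p_i=(\bca)_i\,\sigma$, I would argue: (i) an insertion never nested deep inside (everything happens in slot $1$, or slot $2$ only in the two-free-slot situation) forces the label $p_i$ to take its extreme value $h_{i-1}(\mathbf s)$ — i.e.\ $\xi_k''=h$ — in every case \emph{except} a down-step returning to height $0$, where the $f_2$-versus-$f_1$ choice (equivalently, whether the slot is ``filled'' at the left end or not) leaves exactly the two possibilities $\xi_k''\in\{0,1\}$; (ii) conversely, any path diagram in $\mathcal{PDB}_n$ can be realized, step by step, by a legal sequence of insertions of the four (or five) allowed types, so it lies in the image. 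This is essentially the same bookkeeping as in the remark following Definition~\ref{D3}, now promoted to a full statement, together with the observation that $\psi_{FV}$ is injective on all of $\S_{n+1}$ so injectivity on the subclass is automatic.

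Concretely I would organize it as: (1) state that $\psi_{FV}$ restricted to $\S_{n+1}$ is injective, hence $\Phi_2$ is injective; (2) show $\Phi_2(\S_{n+1}(3124,4123,3142,4132))\subseteq\mathcal{PDB}_n$ by the insertion-encoding analysis above, checking the constraint on $\xi''_k$ heightwise (arbitrary height $h$: label pinned to $h$; height-$1$ down-step: label in $\{0,1\}$); (3) show the reverse inclusion by exhibiting, for a given $(\omega'',\xi'')\in\mathcal{PDB}_n$, the corresponding permutation and verifying its insertion encoding stays within the alphabet of Proposition~\ref{P3}; (4) conclude that $\Phi_2$ is a bijection onto $\mathcal{PDB}_n$ by comparing cardinalities or by noting both inclusions give equality. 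Since $\psi_{FV}$ is a known bijection, surjectivity onto $\mathcal{PDB}_n$ follows once the two inclusions are in place.

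The main obstacle is step~(2)/(3): making the correspondence between the ``$f_2$ in a two-free-slot configuration'' insertion move and the label value $\xi_k''\in\{0,1\}$ on a height-$1$ down-step completely precise, i.e.\ verifying that this is the \emph{only} place where a nontrivial choice survives and that all other steps have their labels forced to the maximal value $h$. This requires carefully tracking, across the evolution of $\sigma$, how the active/free slots of the insertion encoding correspond to the levels of the Laguerre history and how $(\bca)_i\,\sigma$ counts descents-of-a-certain-shape to the right of position $i$; the pattern-avoidance hypothesis is exactly what prevents any ``deep'' insertion that would produce an intermediate label. Once that dictionary is pinned down, both inclusions follow by a routine induction on $n$ (equivalently, on the number of insertions), essentially identical in spirit to the Type A argument preceding this lemma, so I would present it at that level of detail rather than re-deriving $\psi_{FV}$ from scratch.
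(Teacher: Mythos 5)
Your proposal is correct and takes essentially the same route as the paper: the paper's proof is precisely the observation that $\Phi_2$ is the restriction of the already bijective Fran\c con--Viennot map and that, comparing Proposition~\ref{P3} with Definition~\ref{D3}, its image is exactly $\mathcal{PDB}_n$ — which is what you spell out in more detail via the insertion-encoding bookkeeping. One minor slip: the labels of $\psi_{FV}$ on $\S_{n+1}$ are $p_i=(\bac)_i\,\sigma$, not $(\bca)_i\,\sigma$ (the latter belongs to the restricted version $\phi_{FV}$ on $\S_n$), but this does not affect the structure of your argument.
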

\begin{proof}
By Proposition~\ref{P3} and comparing the definition of $\phi_{FV}$ and Definition~\ref{D3}, we have the result.
\end{proof}
\emph{Proof of Theorem~\ref{thm:B1}.} For $\sigma\in\S_n(3124,4123,3142,4132)$, we derive that
\begin{align}
p^{(\bac)\,\sigma}q^{(\cab)\,\sigma}t^{\des\,\sigma}u^{\da\,\sigma}v^{\dd\,\sigma}w^{\val\,\sigma}=\prod_{i=1}^{n}W(s_i,\xi_i'')=W((\omega'',\xi'')),
\end{align}
with $h$ being the height of the steps $s_i$,
\begin{align}
W(s_i,  \xi_i'')
=\begin{cases}
p^{\xi_i''}\cdot w\cdot t&\; \textrm{if}\quad s_i=U;\\
p^{\xi_i''}\cdot q^{h-\xi_i''}&\; \textrm{if}\quad s_i=D;\\
p^{\xi_i''}\cdot u&\; \textrm{if}\quad s_i=L_b;\\
p^{\xi_i''}\cdot v\cdot t&\; \textrm{if}\quad s_i=L_r.
\end{cases}
\end{align}
Therefore, the corresponding polynomial and weights become
\begin{align}
\bB_{n}(p,q,t,u,v,w)=&\sum_{\sigma\in\S_{n+1}(3124,4123,3142,4132)}p^{(\bac)\,\sigma}q^{(\cab)\,\sigma}t^{\des\,\sigma}u^{\da\,\sigma}v^{\dd\,\sigma}w^{\val\,\sigma}\\
=&\sum_{(\omega'',\, \xi'')\in\mathcal{PDB}_{n}}W((\omega'',\xi'')).
\end{align}
By Lemma~\ref{flajolet}, we derive the corresponding continued fraction for the generating function of $\bB_n(p,q,t,u,v,w)$.

\section{Proof of Theorem~\ref{thm:des-qgamma}}\label{Sec:proofg}

\begin{theorem}\label{thm:Aexpan} We have 
\begin{equation}\label{eq:Aexpan}
\bB_{n}(p,q,t,u,v,w)= \sum_{k=0}^{\lfloor n/2\rfloor}\gamma_{n+1,k}(p,q) (tw)^{k} (u+tv)^{n-2k}.
\end{equation}

\end{theorem}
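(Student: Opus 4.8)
The plan is to derive \eqref{eq:Aexpan} from the continued fraction in Theorem~\ref{thm:B1} by a change of variables, exactly paralleling the passage from a Jacobi-type $J$-fraction to its Taylor coefficients via Motzkin path weights. Concretely, in \eqref{cf-pqNaraB} the denominators are $1-p^{2i}(u+tv)z$ and the numerators are $p^{4i-1}tw\,z^2$ for $i\geq 0$ (reading the contributions at successive levels). So if I substitute $u+tv=:b$ and $tw=:c$, the right-hand side of \eqref{cf-pqNaraB} becomes a $J$-fraction in $z$ whose level-$i$ level-step weight is $p^{2i}b$ and whose level-$i$ up-times-down weight is $p^{4i+3}c$ (with appropriate indexing), i.e. a fraction depending on $p$, $b$, $c$ only. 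By Flajolet's Lemma~\ref{flajolet}, its Taylor coefficient $\bB_n$ is $\sum_{\omega}\prod(\text{step weights})$ over Motzkin paths $\omega$ of length $n$, and every such product is a monomial $p^{\bullet}b^{(\text{\# level steps})}c^{(\text{\# up steps})}$. Since a Motzkin path of length $n$ with $k$ up steps has $k$ down steps and $n-2k$ level steps, collecting by $k$ gives
\begin{equation*}
\bB_n(p,q,t,u,v,w)=\sum_{k=0}^{\lfloor n/2\rfloor}\Big(\sum_{\substack{\omega\in\mathcal{MP}_n\\ \#U(\omega)=k}}p^{\,e(\omega)}\Big)\,c^{k}b^{\,n-2k}=\sum_{k=0}^{\lfloor n/2\rfloor}g_{n,k}(p)\,(tw)^{k}(u+tv)^{n-2k},
\end{equation*}
where $e(\omega)$ is the total exponent of $p$ accumulated along $\omega$ and $g_{n,k}(p):=\sum_{\#U=k}p^{e(\omega)}$.

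**Next** I must identify $g_{n,k}(p)$ with $\gamma_{n+1,k}(p,q)$. Here the point is that $\gamma_{n+1,k}(p,q)$ actually carries a $q$-dependence, whereas $g_{n,k}$ as extracted above is a pure function of $p$ — so the real content is that \textit{only the $q^0$ part of $\gamma_{n+1,k}(p,q)$ survives}, or rather that the substitution $b=u+tv$, $c=tw$ cleanly separates the $q$-carrying down-steps at height $1$. Looking back at the weight table in the proof of Theorem~\ref{thm:B1}, the variable $q$ enters only through down-steps $D$ at height $1$ with $\xi_i''=0$ (weight $p^0q^1$), i.e. exactly the $\dd$-type subtlety recorded in the remark after Definition~\ref{D3}. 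To get the $\gamma$-expansion I should instead set things up so that $q$ is retained: substitute only $b=u+tv$ (legitimate because $u$ appears only in $L_b$ weights and $tv$ only in $L_r$ weights, and these occur in the fraction solely through the sum $u+tv$ at each level) and $c=tw$ (legitimate because $t$ appears in $U$ and $L_r$ weights, $w$ only in $U$ weights, combining as $tw$ per up-step after pairing each $U$ with its matching $D$). That yields
\begin{equation*}
\bB_n(p,q,t,u,v,w)=\sum_{k=0}^{\lfloor n/2\rfloor}\Big(\sum_{\substack{(\omega,\xi)\in\mathcal{PDB}_n\\ \#U(\omega)=k}}p^{(\bac)}q^{(\cab)}\Big)(tw)^k(u+tv)^{n-2k},
\end{equation*}
and the inner sum is by definition of $\Phi_2$ equal to $\sum_{\sigma}p^{(\bac)\sigma}q^{(\cab)\sigma}$ over $\sigma\in\S_{n+1}(3124,4123,3142,4132)$ with $\val\,\sigma=k$. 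It remains to match that set with $\S_{n+1,k}$ from Theorem~\ref{thm:des-qgamma}: for a permutation avoiding these four patterns I need $\dd(\sigma)=0$ automatically (so that $\val\,\sigma=\des\,\sigma$ forces $k=\des\,\sigma$ as well), which should follow from Proposition~\ref{P3} — the allowed insertion letters $m_1,l_1,r_1,f_1,f_2$ never produce a double descent — together with the standard fact on 2-Motzkin/Laguerre correspondences that $\val=\des$ precisely when $\dd=0$.

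**The main obstacle** I anticipate is bookkeeping the $p$-exponent and the $q$-term correctly through the contraction: \eqref{cf-pqNaraB} is already in $J$-fraction (contracted) form, so I do not get to choose the Motzkin-path model freely — I either re-expand \eqref{cf-pqNaraB} back to an $S$-fraction via Lemma~\ref{contra-formula} and then apply Flajolet directly to that $S$-fraction, or I use a version of Flajolet's lemma for $J$-fractions and track the pairing of $U$- and $D$-steps to see where the $p^{4i+3}$ and the lone $q$ come from. I expect the cleanest route is the second: apply Lemma~\ref{flajolet} to \eqref{cf-pqNaraB} directly, reading off $b_i+b_i'=p^{2i}(u+tv)$ and $a_ic_{i+1}=p^{4i+3}\,tw$, observe that the $p$-powers depend only on the path shape $\omega$ (not on $\xi$) except for the single height-$1$ down-step subtlety that produces the $q$, and then the grouping by $k=\#U(\omega)$ is immediate. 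Once \eqref{eq:Aexpan} is in hand, Theorem~\ref{thm:des-qgamma} follows by specializing $u=v=1$, $w=1$ in \eqref{eq:Aexpan}, since then $(tw)^k(u+tv)^{n-2k}=t^k(1+t)^{n-2k}$ and $\bB_n(p,q,t,1,1,1)=\bB_n(p,q,t)$ by Theorem~\ref{thm:B1} vs.\ the original defining continued fraction.
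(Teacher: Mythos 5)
Your structural idea is sound and is essentially the paper's first (continued-fraction) proof in combinatorial clothing: the $J$-fraction \eqref{cf-pqNaraB} involves $t,u,v,w$ only through the combinations $u+tv$ (level steps, weight $p^{i}(u+tv)$ at height $i$) and $tw$ (up--down pairs, weight $(p+q)tw$ then $p^{2i+1}tw$), so grouping Flajolet's path weights by the number $k$ of up-steps yields $\bB_n=\sum_k g_{n,k}(p,q)(tw)^k(u+tv)^{n-2k}$. (The paper achieves the same thing by substituting $z\leftarrow z/(u+tv)$, $w\leftarrow w(u+tv)^2/t$ and observing the resulting fraction is free of $t,u,v$; your exponents $p^{2i}$ and $p^{4i+3}$ are misreadings of $p^{i}$ and $p^{2i+1}$, but that is incidental.)

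The genuine gap is in your identification of $g_{n,k}(p,q)$ with $\gamma_{n+1,k}(p,q)$. First, your displayed inner sum over all $(\omega,\xi)\in\mathcal{PDB}_n$ with $\#U(\omega)=k$ double counts: the factor $(u+tv)^{n-2k}$ already encodes the choice of $L_b$ versus $L_r$ at each level position, so the inner sum must range only over diagrams whose level steps are all $L_b$, i.e., under $\Phi_2$, over permutations with $\dd(\sigma)=0$ and $\val\,\sigma=k$ --- exactly the set $\S_{n+1,k}$. Second, your attempted patch, that every $\sigma\in\S_{n+1}(3124,4123,3142,4132)$ automatically has $\dd(\sigma)=0$, is false: $21$, $321$, and any decreasing permutation lie in the class and have double descents (and if the claim were true, $v$ could not occur in $\bB_n(p,q,t,u,v,w)$ at all, contradicting the presence of $u+tv$ in \eqref{cf-pqNaraB}); already for $n=1$ the unrestricted sum gives $2$ while $\gamma_{2,0}(p,q)=1$. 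The correct finish is either to restrict to $L_b$-only diagrams as above, or, as the paper does, to specialize $t=u=1$, $v=0$ in $a_{n+1,k}(p,q,t,u,v)$, the vanishing of $v$ being precisely what enforces $\dd(\sigma)=0$; with that repair your argument closes, and the deduction of Theorem~\ref{thm:des-qgamma} by setting $u=v=w=1$ is fine.
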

It is easy to see that Theorem~\ref{thm:Aexpan} is a generalization of Theorem~\ref{thm:des-qgamma}.
In this section we will give two different proofs for Theorem~\ref{thm:Aexpan}.
\subsection{Continued fraction proof}
\begin{proof}[Proof of Theorem \ref{thm:Aexpan}]
Let 
\begin{align}
\bB_n(p,q,t,u,v,w)=\sum_{k=0}^{\lfloor\frac{n}{2}\rfloor}a_{n+1,k}(p,q,t,u,v)w^k,
\end{align}
so 
\begin{align}\label{H1}
a_{n+1,k}(p,q,t,u,v)=\sum_{\sigma\in\widetilde{\S}_{n+1,k}(3124, 4123, 3142, 4132)}p^{(\bac)\,\sigma}q^{(\cab)\,\sigma}t^{\des\,\sigma}u^{\da\,\sigma}v^{\dd\,\sigma},
\end{align}
where $\widetilde{\S}_{n+1,k}(3124, 4123, 3142, 4132)=\{\sigma\in \S_n(3124, 4123, 3142, 4132): \val\,\sigma=k\}$.
\\
Substituting $z\leftarrow\frac{z}{u+tv}$, $w\leftarrow\frac{w(u+tv)^2}{t}$ in~\eqref{thm:B1}, we obtain
\begin{align}\label{Q1}
&\sum_{n=0}^{\infty}\bB_n(p,\, q,\, t,\,u,\,v,\,\frac{w(u+tv)^2}{t})\cdot\frac{z^{n}}{(u+tv)^n}\\
=&\sum_{n=0}^{\infty}\sum_{k=0}^{\lfloor\frac{n}{2}\rfloor}a_{n+1,k}(p,q,t,u,v)\frac{w^k(u+tv)^{2k}}{t^k}\cdot\frac{z^n}{(u+tv)^n}\\\label{Q3}
=&\sum_{n=0}^{\infty}\sum_{k=0}^{\lfloor\frac{n}{2}\rfloor}\frac{a_{n+1,k}(p,q,t,u,v)}{t^k(u+tv)^{n-2k}}\cdot w^kz^n\\\label{Q2}
=&\cfrac{1}{1-z-\cfrac{(p+q)wz^2}{1-pz-\cfrac{p^3wz^2}{1-p^2z-\cfrac{p^5wz^2}{\ddots}}}}.
\end{align}
Since \eqref{Q2} is free of variables $t$, $u$ and $v$. The coefficient of $w^kz^n$ in  \eqref{Q3} is a polynomial in $p$ and $q$ with nonnegative integral coefficients, if we denote this coefficient by 
$$
P_{n+1,k}(p,q):=\frac{a_{n+1,k}(p,q,t,u,v)}{t^k(u+tv)^{n-2k}},
$$
then we have
\begin{align}
P_{n+1,k}(p,q)=a_{n+1,k}(p,q,1,1,0),
\end{align}
thus
\begin{align}\label{H2}
a_{n+1,k}(p,q,t,u,v)=a_{n+1,k}(p,q,1,1,0)t^k(u+tv)^{n-2k}.
\end{align}
Compare with \eqref{H1}, we have $a_{n+1,k}(p,q,1,1,0)=\sum_{\sigma\in\S_{n+1,k}}p^{(\bac)\,\sigma}q^{(\cab)\,\sigma}$, with \eqref{H2}, which proved Theorem \ref{thm:Aexpan}.


\end{proof}

\subsection{Group action proof}
Besides the patterns mentioned in the introduction, we shall also consider the so-called vincular patterns \cite{BS20}. The number of occurrences of vincular patterns $\cabd, \cadb, \dabc$, and $\dacb$  are defined by

\begin{align}
(\cabd)\,\sigma&=\#\{(i,j,k): i+1<j<k\leq n\,\, \text{and}\,\, \sigma(i+1)<\sigma(j)<\sigma(i)<\sigma(k)\},\label{def:3124}\\
(\cadb)\,\sigma&=\#\{(i,j,k): i+1<j<k\leq n\,\, \text{and}\,\, \sigma(i+1)<\sigma(k)<\sigma(i)<\sigma(j)\},\label{def:3142}\\
(\dabc)\,\sigma&=\#\{(i,j,k): i+1<j<k\leq n\,\, \text{and}\,\, \sigma(i+1)<\sigma(j)<\sigma(k)<\sigma(i)\},\label{def:4123}\\
(\dacb)\,\sigma&=\#\{(i,j,k): i+1<j<k\leq n\,\, \text{and}\,\, \sigma(i+1)<\sigma(k)<\sigma(j)<\sigma(i)\}.\label{def:4132}
\end{align}
For convenience, we also denote
\begin{align*}
(\cabd+\cadb+\dabc+\dacb)\,\sigma&:=(\cabd)\,\sigma+(\cadb)\,\sigma+(\dabc)\,\sigma+(\dacb)\,\sigma.
\end{align*}

\begin{lemma}\label{lem:pattern}
For $n\geq 1$, we have

\begin{equation}
\S_n(3124, 3142, 4123,4132)=\S_n(\cabd, \cadb, \dabc, \dacb).
\end{equation}
\end{lemma}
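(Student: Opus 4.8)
The plan is to prove the two containments $\S_n(3124,3142,4123,4132)\subseteq\S_n(\cabd,\cadb,\dabc,\dacb)$ and its reverse separately, and in fact to observe that both are consequences of a single pointwise dichotomy. First I would unpack the definitions: a classical occurrence of one of the four patterns $3124,3142,4123,4132$ is a quadruple of positions $i<j<k<\ell$ whose values are order-isomorphic to the pattern; a vincular occurrence as defined in \eqref{def:3124}--\eqref{def:4132} is a triple $i<j<k$ (with the extra adjacency constraint $i+1<j$, and using the position $i$ together with its successor $i+1$) realizing the ``glued'' first two letters. The key structural remark is that all four patterns share the same relative order of the first two letters ($3$ then $1$, i.e. a descent from a large value to a small one), so the disjunction over the four patterns is exactly ``there exist positions with $\sigma$ large, then $\sigma$ small, then two further positions completing any of the four shapes.''

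For the inclusion $\S_n(\text{vincular})\subseteq\S_n(\text{classical})$: suppose $\sigma$ contains a classical occurrence of one of the four patterns at positions $i<j<k<\ell$. The first two letters $\sigma(i)>\sigma(j)$ with everything at positions $j,k,\ell$ lying below $\sigma(i)$. I would then slide $i$ to the right: replace $i$ by the largest index $i'<j$ with $\sigma(i')>\sigma(j)$ and $\sigma(i')>$ (all of $\sigma(j),\sigma(k),\sigma(\ell)$ as required by the pattern) — or more simply, argue that one may always take the ``$3$'' to be immediately followed, among the chosen positions, so that $i'+1\le j$; if $i'+1=j$ this is not yet a valid vincular occurrence, but then $\sigma(i'+1)=\sigma(j)$ plays the role of the glued small letter and we still get a vincular triple $(i',j,k)$ or $(i',j,\ell)$ realizing the appropriate three-letter shape. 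The content of this step is the standard fact that a glued pair $\bar{ab}$ can be extracted from a classical $ab$ by moving to an adjacent pair with the correct relative order; I expect to invoke it for each of the four patterns with a short case check.

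For the reverse inclusion $\S_n(\text{classical})\subseteq\S_n(\text{vincular})$: suppose $\sigma$ contains a vincular occurrence, say of $\cabd$, at a triple $i<j<k$ with $i+1<j$ and $\sigma(i+1)<\sigma(j)<\sigma(i)<\sigma(k)$. Now $\sigma(i+1)$ sits at a position strictly between $i$ and $j$, hence $i<i+1<j<k$ are four positions, and their values $\sigma(i),\sigma(i+1),\sigma(j),\sigma(k)$ satisfy $\sigma(i+1)<\sigma(j)<\sigma(i)<\sigma(k)$, which is precisely the pattern $3124$ read on $(\sigma(i),\sigma(i+1),\sigma(j),\sigma(k))$. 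So a vincular $\cabd$-occurrence immediately produces a classical $3124$-occurrence, and symmetrically for the other three. This direction is essentially immediate once the definitions are lined up: the glued letter pair, together with the two trailing letters, literally is a classical quadruple.

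The main obstacle is the first (classical $\Rightarrow$ vincular) direction, where one must genuinely manipulate the occurrence to force the adjacency $i+1<j$ and handle the boundary case $i+1=j$; care is needed because pushing the ``$3$'' rightward could in principle destroy the relative order with the later letters. I would handle this by choosing, among all classical occurrences of the (fixed) pattern, one that minimizes $j-i$, and then showing that minimality forces either $i+1<j$ directly or else lets $\sigma(i+1)$ substitute for $\sigma(j)$; a brief inspection of the four value-orders confirms the substitution always yields a legitimate vincular triple. Once both inclusions are in hand, the lemma follows, and I would note in passing that this is the natural home for the earlier remark that all four patterns begin with the descent ``$31$,'' which is what makes the vincular reformulation clean.
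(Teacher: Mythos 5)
Your overall architecture matches the paper's: the containment $\S_n(3124,3142,4123,4132)\subseteq\S_n(\cabd,\cadb,\dabc,\dacb)$ is immediate because a vincular occurrence is in particular a classical one, and all the work is in turning a classical occurrence into a vincular occurrence of possibly a \emph{different} one of the four patterns. But your mechanism for that hard direction has a genuine gap. Your primary recipe --- take the largest $i'<j$ with $\sigma(i')>\sigma(j)$ and let $\sigma(i'+1)$ (or $\sigma(j)$ when $i'+1=j$) be the glued small letter --- does not guarantee $\sigma(i')>\sigma(k)$, and without that the glued pair need not extend to any of the four vincular patterns. Concretely, take $\sigma=523146$ with the classical $3124$ occurrence at positions $(1,4,5,6)$ (values $5,1,4,6$): your rule gives $i'=3$, and the glued descent $(3,1)$ at positions $(3,4)$ cannot be completed, since the only later values are $4$ and $6$, both larger than $\sigma(i')=3$; the vincular occurrence one must exhibit sits at the pair $(1,2)$ instead. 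Your fallback --- minimize $j-i$ over classical occurrences --- can be made to work, but the sentence ``a brief inspection of the four value-orders confirms the substitution always yields a legitimate vincular triple'' is precisely where the content lies: one must locate $\sigma(i+1)$ relative to $\sigma(j),\sigma(k),\sigma(l)$, and in the case $\sigma(i+1)>\sigma(l)$ (for a $3124$ occurrence, say) the shrunken occurrence is of $4123$, not $3124$, so the minimization has to be taken jointly over all four patterns (or handled by a further argument); none of this case analysis is carried out, and as written the proof does not go through.

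For comparison, the paper's proof needs no extremal choice: from $\sigma(i)>\sigma(k)>\sigma(j)$ it extracts an adjacent pair $(i',i'+1)$ with $i\le i'<i'+1\le j$ and $\sigma(i')>\sigma(k)>\sigma(i'+1)$ (the values must cross the level $\sigma(k)$ somewhere between positions $i$ and $j$), and then a single comparison of $\sigma(i')$ with $\sigma(l)$ produces a vincular occurrence of $\cabd$ (if $\sigma(i')<\sigma(l)$) or of $\dabc$ (if $\sigma(i')>\sigma(l)$) at the triple $(i',k,l)$; the patterns $3142$ and $4132$ are treated symmetrically via $\cadb$ and $\dacb$. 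If you want to salvage your write-up, either replace your choice of $i'$ by ``an adjacent descent crossing the value $\sigma(k)$'' (which is the paper's observation) or complete the joint-minimality case analysis in full.
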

\begin{proof}
It is easy to see $\S_n(3124, 3142, 4123, 4132)\subseteq\S_n(\cabd, \cadb, \dabc, \dacb))$. We only need to show $\S_n(\cabd, \cadb, \dabc, \dacb))\subseteq\S_n(3124, 3142, 4123, 4132)$.\\
\begin{enumerate}
\item Suppose $\sigma\in\S_n(\cabd, \cadb, \dabc, \dacb)$ contain the pattern $3124$, i.e.,  there exists a subsequence $i<j<k<l$ such that $\sigma(l)>\sigma(i)>\sigma(k)>\sigma(j)$. It is easy to see there exist $(i',i'+1)$ such that $\sigma(i')>\sigma(k)>\sigma(i'+1)$ and $i<i'<i'+1<j$, where $i'$ and $i'+1$ might be $i$ and $j$, respectively. 
\begin{enumerate}[(a).]
\item if $\sigma(i')<\sigma(l)$, it is easy to see $\sigma(l)>\sigma(i)>\sigma(k)>\sigma(i'+1)$ for $i'<i'+1<k<l$, which is in contradiction with $\sigma\in\S_n(\cabd)$.
\item if $\sigma(i')>\sigma(l)$, it is easy to see $\sigma(i')>\sigma(l)>\sigma(k)>\sigma(i'+1)$ for $i'<i'+1<k<l$, which is in contradiction with $\sigma\in\S_n(\dabc)$.
\end{enumerate}
Above all, we have $\sigma\in\S_n(3124)$ for  $\sigma\in\S_n(\cabd, \cadb, \dabc, \dacb)$. The proof of $\sigma\in\S_n(4123)$ for  $\sigma\in\S_n(\cabd, \cadb, \dabc, \dacb)$ is similar and omitted.

\item Suppose $\sigma\in\S_n(\cabd, \cadb, \dabc, \dacb)$ contain the pattern $3142$, i.e.,  there exists a subsequence $i<j<k<l$ such that $\sigma(k)>\sigma(i)>\sigma(l)>\sigma(j)$. It is easy to see there exist $(i',i'+1)$ such that $\sigma(i')>\sigma(l)>\sigma(i'+1)$ and $i<i'<i'+1<j$, where $i'$ and $i'+1$ might be $i$ and $j$, respectively. 
\begin{enumerate}[(a).]
\item if $\sigma(i')<\sigma(k)$, it is easy to see $\sigma(k)>\sigma(i')>\sigma(l)>\sigma(i'+1)$ for $i'<i'+1<k<l$, which is in contradiction with $\sigma\in\S_n(\cadb)$.
\item if $\sigma(i')>\sigma(k)$, it is easy to see $\sigma(i')>\sigma(k)>\sigma(l)>\sigma(i'+1)$ for $i'<i'+1<k<l$, which is in contradiction with $\sigma\in\S_n(\dacb)$.
\end{enumerate}
Above all, we have $\sigma\in\S_n(3142)$ for  $\sigma\in\S_n(\cabd, \cadb, \dabc, \dacb)$. The proof of $\sigma\in\S_n(4132)$ for  $\sigma\in\S_n(\cabd, \cadb, \dabc, \dacb)$ is similar and omitted.
\end{enumerate}
Then for $\sigma\in\S_n(\cabd, \cadb, \dabc, \dacb)$, we have 
$\sigma\in\S_n(3124, 3142, 4123, 4132),$
which completes the proof.
\end{proof}

%
%
%

The main tool to  proof  \eqref{eq:Aexpan} in this Section, is  the  well-known  \textit{modified Foata-Strehl action}. See 
Foata and Strehl
\cite{FS74},  Shapiro, Woan, and Getu \cite{SWG83}
and  Br\"and\'en \cite{Bra08}.
First, we recall some definitions from \cite{FS74}.
Let $\sigma\in\S_n$ with boundary condition $\sigma(0)=0$ and $\sigma(n+1)=0$, for $x\in[n]$ the {\em$x$-factorization} of $\sigma$ is defined by
\begin{align}\label{x-factorization}
 \sigma=w_1 w_2x w_3 w_4,
\end{align}
 where $w_2$ (resp.~$w_3$) is the maximal contiguous subword immediately to the left (resp.~right) of $x$ whose letters are all larger than $x$. 
 Note that $w_1,\ldots, w_4$ may be empty. For instance,   
 if $x$ is a double ascent (resp. double descent), then $w_2=\varnothing$ (resp. $w_3=\varnothing$), and if $x$ is a peak then
$w_2=w_3=\varnothing$.
Foata and Strehl~\cite{FS74} considered a mapping $\varphi_x$, i.e., \emph{Foata-Strehl action} on permutations by
exchanging $w_2$ and $w_3$ in \eqref{x-factorization}:
$$
\varphi_x(\sigma)=w_1 w_3x w_2 w_4.
$$
For instance, if $x=3$ and $\sigma=472589316\in\S_9$, then $w_1=472,w_2=589,w_3=\emptyset$ and $w_4=16$.
Thus $\varphi_3(\sigma)=472358916$.
It is known (see \cite{FS74}) that  $\varphi_x$ is an involution acting on $\S_n$ and  that $\varphi_x$ and $\varphi_y$ commute for all $x,y\in[n]$. Br\"and\'en~\cite{Bra08} introduced the modified mapping  $\varphi'_x$ i.e., \emph{modified Foata-Strehl action (or MFS-action for short)} by
\begin{align*}
\varphi'_x(\sigma):=
\begin{cases}
\varphi_x(\sigma),&\text{if $x$ is not a valley  of $\sigma$ 
};\\
\sigma,& \text{if $x$ is a valley  of $\sigma$.}
\end{cases}
\end{align*}
Note that the boundary condition matters, e.g.,  in the above example, 
if $\sigma(n+1)=10$ instead, then $6$ becomes a double ascent and will be not fixed by $\varphi'_6$.
Also, we have $\varphi'_{x}(\sigma)=\sigma$ if $x$ is a peak, valley, otherwise $\varphi'_{x}(\sigma)$ exchanges $w_2$ and $w_3$ in the $x$-factorization of $\sigma$, which is equivalent to moving $x$ from a double ascent to a double descent or vice versa. Then $\varphi'_{x}$'s are involutions and commute.  Hence,  
for any subset $S\subseteq[n]$ we can define the map $\varphi'_S :\S_n\rightarrow\S_n$ by
\begin{align*}
\varphi'_S(\sigma)=\prod_{x\in S}\varphi'_x(\sigma).
\end{align*}
In other words,  the group $\Z_2^n$ acts on $\S_n$ via the mapping $\varphi'_S$ with $S\subseteq[n]$. 
For example,  let  $\sigma=472589316\in \S_9$.
If  $S=\{3, 4,5\}$, we have $\varphi'_{S}(\sigma)=742389516$, 
see Fig.~\ref{valhopFmax} for an illustration.

\begin{figure}[t]
\begin{center}
\begin{tikzpicture}[scale=0.4] 	
\draw[step=1,lightgray,thin] (0,0) grid (10,10); 
	\tikzstyle{ridge}=[draw, line width=1, dotted, color=black] 
	\path[ridge] (0,0)--(1,4)--(2,7)--(3,2)--(4,5)--(5,8)--(6,9)--(7,3)--(8,1)--(9,6)--(10,0); 
	\tikzstyle{node0}=[circle, inner sep=2, fill=black] 
	\tikzstyle{node1}=[rectangle, inner sep=3, fill=mhcblue] 
	\tikzstyle{node2}=[diamond, inner sep=2, fill=davidsonred] 
	\node[node0] at (0,0) {}; 
	\node[node2] at (1,4) {}; 
	\node[node0] at (2,7) {}; 
	\node[node0] at (3,2) {}; 
	\node[node2] at (4,5) {}; 
	\node[node0] at (5,8) {}; 
	\node[node0] at (6,9) {}; 
	\node[node2] at (7,3) {}; 
	\node[node0] at (8,1) {};
	\node[node0] at (9,6){};
	\node[node0] at(10,0){}; 
	\tikzstyle{hop1}=[draw, line width = 1.5, color=davidsonred,->]
	\tikzstyle{hop2}=[draw, line width = 1.5, color=davidsonred,<-] 
	\path[hop1] (4,5)--(6.5,5);
	\path[hop2] (3.5,3)--(7.3, 3);
	\path[hop1](1,4)--(2.5,4);
	\tikzstyle{pi}=[above=-1] 
	\node[pi] at (0,0) {0}; 
	\node[pi] at (1,0) {$\red{4}$}; 
	\node[pi] at (2,0) {$7$}; 
	\node[pi] at (3,0) {2}; 
	\node[pi] at (4,0) {\red{5}}; 
	\node[pi] at (5,0) {$8$}; 
	\node[pi] at (6,0) {$9$}; 
	\node[pi] at (7,0) {\red{3}}; 
	\node[pi] at (8,0) {1};
	\node[pi] at (9,0) {6};
	\node[pi] at (10,0) {0};
	\path[draw,line width=1,->] (11,5)--(15,5); 
	\begin{scope}[shift={(16,0)}] 
	\draw[step=1,lightgray,thin] (0,0) grid (10,10); 
	\path[ridge] (0,0)--(1,7)--(2,4)--(3,2)--(4,3)--(5,8)--(6,9)--(7,5)--(8,1)--(9,6)--(10,0); 
	\node[node0] at (0,0) {}; 
	\node[node0] at (1,7) {}; 
	\node[node0] at (2,4) {}; 
	\node[node0] at (3,2) {}; 
	\node[node0] at (4,3) {}; 
	\node[node0] at (5,8) {}; 
	\node[node0] at (6,9) {}; 
	\node[node0] at (7,5) {}; 
	\node[node0] at (8,1) {}; 
	\node[node0] at (9,6){};
	\node[node0] at (10,0){};
        \node[pi] at (0,0) {0}; 
	\node[pi] at (1,0) {$7$}; 
	\node[pi] at (2,0) {$4$}; 
	\node[pi] at (3,0) {2}; 
	\node[pi] at (4,0) {3}; 
	\node[pi] at (5,0) {$8$}; 
	\node[pi] at (6,0) {$9$};
	\node[pi] at (7,0) {5}; 
	\node[pi] at (8,0) {1};
	\node[pi] at (9,0){6};
	\node[pi]  at (10,0){0};
\end{scope}
\end{tikzpicture}
\end{center}
\caption{MFS-action on $\sigma=472589316\in \S_9$ with $S=\{3, 4, 5\}$
yields $\varphi'_{S}(\sigma)=742389516$\label{valhopFmax}}
\end{figure}


For  entries $k,\,l\in [n]$ with $k<l$, we define the following refined statistics on $\sigma\in\S_n$,
\begin{align*}
(\cabd)\{k,l\}(\sigma):=&\#\{i:1<i<\sigma^{-1}(k)< \sigma^{-1}(l)\,\, \text{and}\,\, \sigma(i)<k<\sigma(i-1)<l\},\nonumber\\
(\cadb)\{k,l\}(\sigma):=&\#\{i:1<i<\sigma^{-1}(l)< \sigma^{-1}(k)\,\, \text{and}\,\,  \sigma(i)<k<\sigma(i-1)<l\},\nonumber\\
(\dabc)\{k,l\}(\sigma):=&\#\{i:1<i<\sigma^{-1}(k)<\sigma^{-1}(l)\,\, \text{and}\,\, \sigma(i)<k<l<\sigma(i-1)\},\nonumber\\
(\dacb)\{k,l\}(\sigma):=&\#\{i:1<i<\sigma^{-1}(l)< \sigma^{-1}(k)\,\, \text{and}\,\, \sigma(i)<k<l<\sigma(i-1)\}.
\end{align*}
We denote
\begin{align}
&(\cabd+\cadb+\dabc+\dacb)\{k,l\}(\sigma)\nonumber\\
:=&(\cabd)\{k,l\}(\sigma)+(\cadb)\{k,l\}(\sigma)+(\dabc)\{k,l\}(\sigma)+(\dacb)\{k,l\}(\sigma),\nonumber
\end{align}
then the following Lemma holds.
\begin{lemma}\label{One}
For $\sigma\in\S_n$ and $k, l\in[n]$ with $k<l$, the statistic $(\cabd+\cadb+\dabc+\dacb)\{k,l\}$ is invariant under the group action $\varphi'_S$ with  $S\subset [n]$, i.e.,
\begin{align}
&(\cabd+\cadb+\dabc+\dacb)\{k,l\}(\sigma)\\
=&(\cabd+\cadb+\dabc+\dacb)\{k,l\}(\varphi'_S(\sigma)).
\end{align}
\end{lemma}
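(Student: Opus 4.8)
The plan is to reduce the claim to the case $|S|=1$, since the $\varphi_x'$ commute and any $\varphi_S'$ is a composition of single-element actions; thus it suffices to show that for each fixed $x\in[n]$, applying $\varphi_x'$ does not change $(\cabd+\cadb+\dabc+\dacb)\{k,l\}(\sigma)$. Recall that $\varphi_x'$ either fixes $\sigma$ (when $x$ is a peak or valley) or else, in the $x$-factorization $\sigma=w_1w_2xw_3w_4$, swaps $w_2$ and $w_3$ — equivalently, it moves the entry $x$ between being a double ascent and being a double descent while keeping everything else in place up to this local rearrangement. So I would fix $x$ with $\varphi_x'(\sigma)\neq\sigma$ and track how each of the four quantities $(\cabd)\{k,l\},(\cadb)\{k,l\},(\dabc)\{k,l\},(\dacb)\{k,l\}$ changes, showing the total is preserved.

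The key observation is that each summand in the definition of these refined statistics is indexed by a position $i$ with $1<i$ and is controlled by the pair $(\sigma(i-1),\sigma(i))$ together with the relative order of $\sigma^{-1}(k)$ and $\sigma^{-1}(l)$; the defining inequalities all have the shape ``$\sigma(i)<k<\sigma(i-1)$'' (with $\sigma(i-1)$ either between $k$ and $l$, or above $l$) and an order condition on where $k$ and $l$ sit. First I would note that $\varphi_x'$ only affects pairs $(\sigma(i-1),\sigma(i))$ of adjacent entries near the block $w_2xw_3$: inside $w_1$ and $w_4$ nothing changes, and the entries of $w_2,w_3$ individually keep their values, only their positions relative to $x$ move. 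So the contributions to the four statistics coming from pairs entirely inside $w_1$ or entirely inside $w_4$ are untouched; likewise pairs within $w_2$ or within $w_3$ keep both their values and their internal adjacency. The only pairs that can change are the ``boundary'' adjacencies: the juncture $w_1\,|\,w_2$ versus $w_1\,|\,w_3$, the two sides of $x$, and the juncture $w_2\,|\,w_4$ versus $w_3\,|\,w_4$. Since every letter of $w_2\cup w_3$ exceeds $x$, and the inequalities $\sigma(i)<k$ or $\sigma(i)<k<\sigma(i-1)$ force the relevant entry $\sigma(i)$ to be small, I would do a short case analysis on whether $x<k$, $k\le x<l$ ... wait, $x$ can equal $k$ or $l$ — handling $x\in\{k,l\}$ separately — or $x\ge l$, and in each case check that the boundary adjacencies either contribute $0$ to all four statistics or contribute the same total before and after the swap. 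The role of the permutation being in $\S_n(3124,3142,4123,4132)=\S_n(\cabd,\cadb,\dabc,\dacb)$ (Lemma~\ref{lem:pattern}) is that the \emph{unrefined} statistics vanish, which should make several of these boundary cases degenerate, though I expect the refined invariance to hold for all $\sigma\in\S_n$ as the statement asserts.

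The main obstacle will be the bookkeeping at the position of $x$ itself and at the two junctures with $w_1$ and $w_4$: when $x$ passes from a double descent to a double ascent, the entry sitting immediately left of the $w_2 w_3$ block and the entry immediately right of it each get a new neighbor, and one must verify that any newly created triple of the form $(\sigma(i-1),\sigma(i))=(\text{large},\text{small})$ contributing to one of the four patterns is exactly balanced by a triple destroyed on the other side, or by one that migrates from $\cabd$-type to $\dabc$-type (etc.) according to whether $\sigma(i-1)$ lies below or above $l$. I would organize this by pairing up the contribution indexed by the left juncture in $\sigma$ with the contribution indexed by the right juncture in $\varphi_x'(\sigma)$, using that the multiset of values $\{$entry just left of the block, entry just right of the block$\}$ is preserved and that the position of $x$ relative to $\sigma^{-1}(k),\sigma^{-1}(l)$ flips in a way compatible with the definitions of $\cabd$ versus $\cadb$ and $\dabc$ versus $\dacb$. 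Once this local matching is established, summing over all such boundary indices gives the stated equality for $|S|=1$, and commuting/composing finishes the general case.
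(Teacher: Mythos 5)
Your route is viable and genuinely different from the paper's. The paper avoids all local bookkeeping: it first observes that summing the four refined statistics erases both the distinction $\sigma(i-1)<l$ versus $l<\sigma(i-1)$ and the order of $\sigma^{-1}(k)$, $\sigma^{-1}(l)$, so that $(\cabd+\cadb+\dabc+\dacb)\{k,l\}(\sigma)$ simply counts descents with $\sigma(i)<k<\sigma(i-1)$ occurring before both $k$ and $l$; these correspond to consecutive peak--valley pairs straddling the value $k$ whose valley precedes both $\sigma^{-1}(k)$ and $\sigma^{-1}(l)$, and invariance is then immediate, since the MFS-action fixes peaks and valleys and $k$, $l$ hop only across entries larger than themselves, so they can never cross the valley entry (which is smaller than $k$). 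Your single-hop analysis can be pushed through (and, as you suspect, needs no pattern avoidance -- the lemma holds on all of $\S_n$), but the deferred bookkeeping is the entire proof in your approach, and two points there need care. First, the positional constraints matter as much as the value constraints: when $x=k$ and $l$ lies in the hopped block, the order of $\sigma^{-1}(k)$ and $\sigma^{-1}(l)$ flips, so contributions migrate between $\cabd$ and $\cadb$ (and between $\dabc$ and $\dacb$) even for descents deep inside $w_1$; only the sum is conserved, so the case analysis must be run on the sum throughout. Second, writing the hop as $\cdots a\,x\,W\,b\cdots\mapsto\cdots a\,W\,x\,b\cdots$ with every letter of $W$ larger than $x$ and $a,b$ smaller than $x$, the left juncture never contributes (none of $(a,x)$, $(x,W_1)$, $(a,W_1)$ can straddle $k$), so the matching is not your proposed ``left juncture versus right juncture'' pairing; everything happens at the right end, where the destroyed adjacency $(W_m,b)$ is compensated by $(W_m,x)$ if $x<k$ and by $(x,b)$ if $x>k$, with identical before-$k,l$ status because $x$ lands immediately before $b$, while in the degenerate cases $x\in\{k,l\}$ or $k,l\in W$ every candidate fails either the value or the position test on both sides. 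With that analysis written out, your plan yields a complete, though considerably longer, proof than the paper's reformulation argument.
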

\begin{proof}
An alternative description of $(\cabd+\cadb+\dabc+\dacb)\{k,l\}(\sigma)$ is the number pairs $(\sigma(i),\sigma(j))$ such that $1\leq i<j<\min\{\sigma^{-1}(k), \sigma^{-1}(l)\}$ and there exists $i\leq m<j$ such that $\sigma(m+1)<k<\sigma(m)<l$ or $\sigma(m+1)<k<l<\sigma(m)$, where $(\sigma(i),\sigma(j))$ is a pair of consecutive peak and valley. By consecutive we mean that there are no other peaks and valleys in between $\sigma(i)$ and $\sigma(j)$. The number of such pairs is invariant under the action since $\sigma(i)$ and $\sigma(j)$ can not move and $k, l$ can not move over the peak $\sigma(i)$.
\end{proof}
Since we have 
\begin{align}\label{kk1}
(\cabd+\cadb+\dabc+\dacb)\,\sigma=\sum_{\substack{k,\, l\in[n]\\k<l}}(\cabd+\cadb+\dabc+\dacb)\{k,l\}(\sigma),
\end{align}
then we have following Lemma.

\begin{lemma}\label{lem:stat}
For $\sigma\in \S_n$ 
 the triple permutation statistic $(\pk, \val, \cabd+\cadb+\dabc+\dacb)$ is invariant under the group action $\varphi'_S$ with  $S\subset [n]$, i.e.,
 \begin{align}\label{quintuple}
 &(\pk, \val, \cabd+\cadb+\dabc+\dacb)\;\sigma\nonumber\\
 =&(\pk, \val,  \cabd+\cadb+\dabc+\dacb)\;\varphi'_S(\sigma).
\end{align}
\end{lemma}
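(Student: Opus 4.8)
The plan is to deduce the invariance of the triple statistic from the two ingredients already in hand: the well-known behaviour of the MFS-action on peaks and valleys, and Lemma~\ref{One} together with the decomposition \eqref{kk1}. First I would recall the standard fact (going back to Foata--Strehl~\cite{FS74} and used by Br\"and\'en~\cite{Bra08}) that the modified action $\varphi'_x$ never changes whether a given entry is a peak or a valley: the only effect of $\varphi'_x$ is to swap $w_2$ and $w_3$ in the $x$-factorization, which toggles $x$ between being a double ascent and a double descent but leaves $x$ fixed in place and leaves every other entry's local type (peak/valley/double ascent/double descent) unchanged. Since $\varphi'_S=\prod_{x\in S}\varphi'_x$ is a composition of such maps, it follows immediately that $\Pk\,\sigma=\Pk\,\varphi'_S(\sigma)$ and $\Val\,\sigma=\Val\,\varphi'_S(\sigma)$, hence in particular $\pk\,\sigma=\pk\,\varphi'_S(\sigma)$ and $\val\,\sigma=\val\,\varphi'_S(\sigma)$. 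This disposes of the first two coordinates of the triple.

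For the third coordinate, the point is just to sum Lemma~\ref{One} over all pairs. By \eqref{kk1} we have
\begin{align*}
(\cabd+\cadb+\dabc+\dacb)\,\sigma=\sum_{\substack{k,l\in[n]\\k<l}}(\cabd+\cadb+\dabc+\dacb)\{k,l\}(\sigma),
\end{align*}
and the same identity holds with $\sigma$ replaced by $\varphi'_S(\sigma)$. Lemma~\ref{One} asserts that each summand on the right is unchanged when $\sigma$ is replaced by $\varphi'_S(\sigma)$, so the two sums are term-by-term equal, giving $(\cabd+\cadb+\dabc+\dacb)\,\sigma=(\cabd+\cadb+\dabc+\dacb)\,\varphi'_S(\sigma)$. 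Combining this with the peak/valley invariance from the previous paragraph yields \eqref{quintuple}.

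I expect the genuinely substantive work to be hidden in Lemma~\ref{One} rather than in this deduction, and within the present lemma the only point requiring care is the claim that $\varphi'_x$ preserves the peak/valley status of \emph{every} entry, not merely of $x$ itself. The subtlety is at the boundaries of the blocks $w_1,w_2,w_3,w_4$: one must check that swapping $w_2$ and $w_3$ does not accidentally create or destroy a peak or valley at the last letter of $w_1$, the first/last letters of $w_2$ and $w_3$, or the first letter of $w_4$. This follows from the defining property that $w_2$ and $w_3$ consist precisely of the maximal runs of entries larger than $x$ adjacent to $x$: every letter of $w_2\cup w_3$ exceeds $x$, the letters of $w_1,w_4$ bordering these runs are smaller than the run-endpoints in the relevant way, and $x$ remains strictly below all of $w_2\cup w_3$ in both configurations, so the comparison pattern at each of these boundary positions is the same before and after the swap. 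Once this is granted, the rest is the bookkeeping described above, and no further obstacle arises.
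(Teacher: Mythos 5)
Your proof is correct and follows essentially the same route as the paper: the paper likewise invokes the standard invariance of $\pk$ and $\val$ under the MFS-action and then applies Lemma~\ref{One} together with \eqref{kk1} to handle the pattern statistic. Your extra care about why $\varphi'_x$ preserves every entry's peak/valley status is a welcome elaboration of a fact the paper simply asserts, but it does not change the argument.
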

\begin{proof}
Since the statistics $\pk$ and $\val$ are invariant under  MFS-action, and Lemma~\ref{One} with \eqref{kk1} ensure that $(\cabd+\cadb+\dabc+\dacb)$ is invariant under MFS-action, which proves the Lemma.

\end{proof}

\begin{figure}[htb]
\setlength {\unitlength} {0.8mm}
\begin{picture} (90,45) \setlength {\unitlength} {1mm}
\thinlines




\put(-37, -2){$0$}
\put(-36, 2){\circle*{1.3}}
\put(-11,27){\line(-1,-1){25}}
\put(-11,27){\circle*{1.3}}\put(-13, 29){$\sigma(i)$}
\put(-4, 20){\circle*{1.3}}
\put(13, 3){\circle*{1.3}}
\put(13,3){\line(-1,1){24}}

\put(10,-2){$\sigma(j)$}
\put(45,35){\circle*{1.3}}
\put(45,35){\line(1,-1){15}}\put(60,20){\circle*{1.3}}
\put(13,3){\line(1,1){32}}
\put(68, 28){\circle*{1.3}}
\put(60,20){\line(1,1){8}}
\put(94,2){\circle*{1.3}}
\put(68,28){\line(1,-1){26}}

\put(-4,20){\circle*{1.3}}\put(-18,20){\dashline{1}(1,0)(10,0)}\put(-18,20){\vector(-1,0){0.1}}
\put(-2,20){$\sigma(m)$}
\put(22,12){\circle*{1.3}}\put(22,12){\dashline{1}(1,0)(62,0)}\put(84,12){\vector(1,0){0.1}}
\put(18,12){$k$}
\put(50,30){\circle*{1.3}}\put(40,30){\dashline{1}(1,0)(10,0)}\put(40,30){\vector(-1,0){0.1}}
\put(52,30){$l$}

\put(94,-2){$0$}
\end{picture}
\caption{Valley-hopping on $(\cabd+\cadb+\dabc+\dacb)\{k, l\}(\sigma)$
\label{Fig:3124}}
\end{figure}

 Note that in Fig.~\ref{valhopFmax}, we have 
 $$(\cabd+\cadb+\dabc+\dacb)\,\sigma=9=(\cabd+\cadb+\dabc+\dacb)\,\varphi'_S(\sigma).$$

 \begin{lemma}
 The MFS-action $\varphi'_S$ is closed on the subset $\S_n(3124, 3142, 4123, 4132)$.
 
 \end{lemma}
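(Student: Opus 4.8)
The plan is to deduce closure directly from the invariance already established, rather than by a case analysis of how $\varphi'_x$ rearranges an $x$-factorization. The starting point is Lemma~\ref{lem:pattern}, which identifies the class with a vincular-pattern class, $\S_n(3124, 3142, 4123, 4132)=\S_n(\cabd, \cadb, \dabc, \dacb)$. Since $(\cabd+\cadb+\dabc+\dacb)\,\sigma$ is a sum of four nonnegative occurrence counts, a permutation $\sigma$ lies in this class if and only if $(\cabd+\cadb+\dabc+\dacb)\,\sigma=0$. Thus the whole question reduces to showing that this single nonnegative statistic, once it equals $0$, still equals $0$ after applying $\varphi'_S$.

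For that I would invoke \eqref{kk1}, which writes $(\cabd+\cadb+\dabc+\dacb)\,\sigma$ as the sum over all pairs $k<l$ in $[n]$ of the refined quantities $(\cabd+\cadb+\dabc+\dacb)\{k,l\}(\sigma)$. Each summand is a nonnegative integer, and Lemma~\ref{One} asserts that each summand is individually invariant under the group action $\varphi'_S$ for every $S\subseteq[n]$; hence so is the total, which is also recorded in Lemma~\ref{lem:stat}. Combining: if $\sigma\in\S_n(3124, 3142, 4123, 4132)$ then $(\cabd+\cadb+\dabc+\dacb)\,\sigma=0$, so $(\cabd+\cadb+\dabc+\dacb)\,\varphi'_S(\sigma)=0$ by the invariance, and a second application of Lemma~\ref{lem:pattern} places $\varphi'_S(\sigma)$ back in $\S_n(3124, 3142, 4123, 4132)$. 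Since $S$ was arbitrary, $\varphi'_S$ maps the class into itself.

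There is essentially no obstacle remaining once Lemma~\ref{One} and \eqref{kk1} are in hand; the only point requiring a little care is that one uses the full set equality of Lemma~\ref{lem:pattern} (both inclusions), not just one direction. If a self-contained argument were preferred, the alternative would be to assume $\varphi'_x(\sigma)$ contains an occurrence of one of $3124,3142,4123,4132$ and trace the three entries playing the roles of the non-first pattern letters back through the factorization $\sigma=w_1w_2xw_3w_4$, noting that swapping $w_2$ and $w_3$ keeps all of these entries on the same side of $x$ and preserves their relative order, so an occurrence survives in $\sigma$; this works but is more laborious than the invariance route, which is why I would present the latter.
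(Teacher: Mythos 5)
Your argument is correct and is essentially the paper's own proof: the paper likewise deduces closure directly from Lemma~\ref{lem:pattern} (the equality with the vincular class $\S_n(\cabd,\cadb,\dabc,\dacb)$) together with the invariance of $(\cabd+\cadb+\dabc+\dacb)$ under $\varphi'_S$ from Lemma~\ref{lem:stat}. You merely spell out the zero-occurrence reasoning that the paper leaves implicit, so there is nothing substantive to add.
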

 \begin{proof}
 This follows directly form Lemmas~\ref{lem:pattern} and \ref{lem:stat}.
 
 \end{proof}

 \begin{lemma}\cite[Theorem~3.8]{FTHZ}\label{lem:stat2}
 The statistics $(\bac)$ and $(\cab)$ are constant on any orbit under the MFS-action.
 \end{lemma}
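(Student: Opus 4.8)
The plan is to reduce everything to a single generator of the MFS-action and then track the two statistics through the elementary move. The maps $\varphi'_x$, $x\in[n]$, are commuting involutions and the orbit operators $\varphi'_S$, $S\subseteq[n]$, are their products, so it suffices to prove $(\bac)\,\sigma=(\bac)\,\varphi'_x(\sigma)$ and $(\cab)\,\sigma=(\cab)\,\varphi'_x(\sigma)$ for one fixed $x$. If $x$ is a peak or a valley of $\sigma$ then $\varphi'_x(\sigma)=\sigma$, so there is nothing to prove. If $x$ is a double descent, then $\tau:=\varphi'_x(\sigma)$ has $x$ as a double ascent and $\varphi'_x(\tau)=\sigma$, so this case reduces to the double-ascent one applied to $\tau$. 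Hence we may assume $x$ is a double ascent, in which case the $x$-factorization is $\sigma=w_1\,x\,w_3\,w_4$ with $w_2=\varnothing$, every letter of $w_3$ exceeds $x$, and --- because $x$ is a double ascent and $w_3$ is maximal, using the boundary convention $\sigma(0)=\sigma(n+1)=0$ --- the letter immediately preceding $x$ and the letter immediately following $w_3$ are both smaller than $x$. The move is $\sigma\mapsto\sigma'=w_1\,w_3\,x\,w_4$: the blocks $w_1$ and $w_4$ stay put, $w_3$ keeps its internal order but moves to the left of $x$, and $x$ moves to the right of $w_3$, becoming a double descent.

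Next I would read each statistic as a sum over ``local windows'': $(\bac)\,\sigma$ counts the pairs formed by a position $i$ and a later ascent $(j,j+1)$ with $\sigma(j)<\sigma(i)<\sigma(j+1)$, while $(\cab)\,\sigma$ counts the pairs formed by a descent $(j-1,j)$ and a later position $i$ with $\sigma(j)<\sigma(i)<\sigma(j-1)$. Comparing the adjacent-letter pairs of $\sigma$ and $\sigma'$, the only difference is that the two ascents of $\sigma$ flanking the letter $x$ (from the last letter of $w_1$ up to $x$, and from $x$ up to the first letter of $w_3$) are replaced in $\sigma'$ by the single ascent from the last letter of $w_1$ to the first letter of $w_3$, while dually the one descent of $\sigma$ from the last letter of $w_3$ to the first letter of $w_4$ is replaced in $\sigma'$ by the two descents from the last letter of $w_3$ down to $x$ and from $x$ down to the first letter of $w_4$. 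Every remaining ascent or descent lies inside a single block $w_1$, $w_3$ or $w_4$ whose internal order, and whose position relative to any fixed probe letter, is unchanged, so the windows it generates correspond bijectively between $\sigma$ and $\sigma'$. The one subtlety is that $x$ passes from the left to the right of $w_3$, so the set of candidate probes for a within-$w_3$ window is not literally the same; but $x$ can never be such a probe, since every letter of $w_3$ exceeds $x$, so no window is actually gained or lost.

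It then remains to match the flanking ascents and descents by a short bookkeeping. For $(\bac)$, the probe of such a window must lie in $w_1$ and is never $x$; writing $f$ for the first letter of $w_3$ and $p$ for the last letter of $w_1$ (so $p<x<f$), the two $\sigma$-ascents together contribute $\#\{i\in w_1:p<\sigma(i)<x\}+\#\{i\in w_1:x<\sigma(i)<f\}$, whereas the merged $\sigma'$-ascent contributes $\#\{i\in w_1:p<\sigma(i)<f\}$, and these agree because no letter of $w_1$ equals $x$. The computation for $(\cab)$ is the mirror image, with $w_4$ in place of $w_1$ and ``descent with a probe on the right'' in place of ``ascent with a probe on the left'': writing $g$ for the first letter of $w_4$ and $\ell$ for the last letter of $w_3$ (so $g<x<\ell$), the one $\sigma$-descent contributes $\#\{i\in w_4:g<\sigma(i)<\ell\}$, the two $\sigma'$-descents contribute $\#\{i\in w_4:x<\sigma(i)<\ell\}+\#\{i\in w_4:g<\sigma(i)<x\}$, and again these agree. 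Summing the four parts --- untouched windows, vanishing $x$-windows inside $w_3$, matched flanking ascents, matched flanking descents --- gives $(\bac)\,\sigma=(\bac)\,\sigma'$ and $(\cab)\,\sigma=(\cab)\,\sigma'$, hence constancy on MFS-orbits.

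I expect the only real nuisance to be the bookkeeping at the two extremes: when $w_1$ (resp.\ $w_4$) is empty, the relevant flanking ascent ``at position $0$'' (resp.\ flanking descent ``at position $n$'') is not an admissible window, and the telescoping must be re-checked using $\sigma(0)=0$ or $\sigma(n+1)=0$; in each such case all the affected counts collapse to $0$ on both sides, so nothing goes wrong, but it should be written out carefully. (Alternatively one could mirror the pair-decomposition argument of Lemma~\ref{One}, splitting $(\bac)$ and $(\cab)$ into refined statistics indexed by pairs of values and invoking the fact that under $\varphi'_x$ the moving letters can never hop over a peak or a valley; I would nonetheless prefer the direct computation above, since it is self-contained.)
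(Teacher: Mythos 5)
Your proof is correct. Note, however, that the paper itself gives no argument for this lemma: it is imported wholesale from \cite[Theorem~3.8]{FTHZ}, so your write-up is not an alternative to an internal proof but a self-contained replacement for the citation. Your route is the natural one and it holds up under scrutiny: reducing to a single generator $\varphi'_x$ is legitimate since the $\varphi'_x$ are commuting involutions and $\varphi'_S$ is their product; the peak/valley cases are trivial and the double-descent case is the inverse of the double-ascent case; in the double-ascent case the multiset of adjacent pairs of $\sigma$ and $\sigma'=\varphi'_x(\sigma)$ differs exactly in the two flanking ascents $(p,x)$, $(x,f)$ merging into $(p,f)$ and the flanking descent $(\ell,g)$ splitting into $(\ell,x)$, $(x,g)$; your bookkeeping (probes in $w_1$ for $(\bac)$, probes in $w_4$ for $(\cab)$, together with the observation that $x$ can never act as a probe against a window inside $w_3$ because every letter of $w_3$ exceeds $x$) accounts for all occurrences, and the degenerate cases $w_1=\varnothing$ or $w_4=\varnothing$ indeed contribute $0$ on both sides because the statistics $(\bac)_i$ and $(\cab)_i$ only ever involve positions $1,\dots,n$, never the boundary letters. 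Conceptually your argument is in the same spirit as the paper's proof of Lemma~\ref{One} (local invariance of vincular-pattern counts under valley-hopping, exploiting that the hopping letter never crosses a peak or valley), and its benefit is precisely that it makes this step of the paper self-contained at the modest cost of the explicit window-matching you carried out; the paper's choice buys brevity by outsourcing exactly that bookkeeping to \cite{FTHZ}.
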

 \vskip 3mm
 
 \begin{proof}[Proof of Theorem~\ref{thm:Aexpan}]
 For any  permutation $\sigma\in \S_n$,  let $\Orb(\sigma)=\{g(\sigma): g\in\Z_2^n\}$ be the orbit of $\sigma$ under the MFS-action. 
The MFS-action divides the set $\S_n$ into disjoint orbits. 
 Moreover, for $\sigma\in \S_n$, if $x$ is a double descent of $\sigma$, then 
 $x$ is a double ascent of $\varphi'_x(\sigma)$.
 Hence, there is a unique  
  permutation in each orbit which has no 
 double descent.
 Now, let 
  $\bar{\sigma}$ be such a unique element  in $\Orb(\sigma)$, then 
  \begin{align*}
  \da(\bar{\sigma})&=n-\pk(\bar{\sigma})-\val(\bar{\sigma});\\
   \des(\bar{\sigma})&=\pk(\bar{\sigma})-1=\val(\bar{\sigma}).
   \end{align*}
 And for any other $\sigma'\in\Orb(\pi)$, it can be obtained from $\bar{\pi}$ by repeatedly applying $\varphi'_x$ for some double ascent $x$ of $\bar{\sigma}$. Each time this happens, $\des$ and $\dd$ increases by $1$, $\da$ decreases by $1$, and $\pk$ and $\val$ keep unchanged.
We have 
\begin{align*}\label{orbit}
&\sum_{\sigma'\in\Orb(\sigma)}t^{\des\,\sigma'}u^{\da\,\sigma'}v^{\dd\,\sigma'}w^{\valley\,\sigma'}\\
=&(tw)^{\valley\,\bar{\sigma}}(u+tv)^{\da\,\bar{\sigma}}\\
=&(tw)^{\des\,\bar{\sigma}}(u+tv)^{n-2\des\,\bar{\sigma} -1}.
\end{align*}
Therefore, by Lemma~\ref{lem:stat2}, we obtain \eqref{eq:Aexpan}.

 \end{proof}
 
 \section{Another kind of $(q,t)$-Catalan numbers}\label{Varicatalan}
 In this section, we introduce another kind of $(q, t)$-Catalan numbers.
 
 Let
 \begin{equation}\label{p2}
\widehat{C}_n(q,t,u,v,w):=\sum_{\sigma\in\S_{n+1}(312)}q^{(\bac)\,\sigma}t^{\des\,\sigma}u^{\da\,\sigma}v^{\dd\,\sigma}w^{\val\,\sigma}
\end{equation}
 \begin{theorem}\label{T1}
We have
\begin{align}
&\sum_{n\geq0}\widehat{C}_n(q,t,u,v,w)z^{n}=\nonumber\\
&\hspace{2cm}\cfrac{1}{1-(u+vt)\cdot z-\cfrac{wtq\cdot z^{2}}{1-q(u+vt)\cdot z-\cfrac{
 wtq^3\cdot z^{2}}{1-q^2(u+vt)\cdot z-\cfrac{wtq^5\cdot z^2}{1-\cdots}}}}\label{eq:c2}
\end{align}
\end{theorem}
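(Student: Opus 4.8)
The plan is to obtain Theorem~\ref{T1} as a specialization of Theorem~\ref{thm:B1}: I would substitute $q\mapsto 0$ in the Type~B generating function so as to kill the statistic $(\cab)$, and then show that the surviving terms are indexed exactly by $\S_{n+1}(312)$, whence the collapsed continued fraction of \eqref{cf-pqNaraB} is precisely \eqref{eq:c2}.

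The one combinatorial input I would establish is the identity $\{\sigma\in\S_{n+1}:(\cab)\,\sigma=0\}=\S_{n+1}(312)$ together with the inclusion $\S_{n+1}(312)\subseteq\S_{n+1}(3124,4123,3142,4132)$. The inclusion is immediate, since each of the four length-four patterns contains $312$ on a triple of positions (the first three entries of $3124$ and of $4123$, positions $1,2,4$ of $3142$, the first three entries of $4132$), so any occurrence of one of them yields an occurrence of $312$. For the identity: a positive contribution to $(\cab)_i\,\sigma$ comes from adjacent positions $j-1<j<i$ with $\sigma(j)<\sigma(i)<\sigma(j-1)$, which is itself an occurrence of $312$, so $(\cab)\,\sigma=0$ whenever $\sigma$ avoids $312$; conversely, if $\sigma$ contains a $312$ at positions $a<b<c$, and $m$ is the largest index in $[a,b)$ with $\sigma(m)>\sigma(c)$, then $\sigma(m)>\sigma(c)>\sigma(m+1)$ with $m<m+1\le b<c$, so $(\cab)_c\,\sigma\ge 1$. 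Combining gives $\{\sigma\in\S_{n+1}(3124,4123,3142,4132):(\cab)\,\sigma=0\}=\S_{n+1}(312)$.

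Granting this, I would finish as follows. In Theorem~\ref{thm:B1} set the $(\cab)$-variable $q$ equal to $0$; since $0^{0}=1$ and $0^{k}=0$ for $k\ge1$, the sum defining $\bB_n(p,0,t,u,v,w)$ collapses onto the permutations with $(\cab)\,\sigma=0$, i.e.\ onto $\S_{n+1}(312)$, so after relabelling the indeterminate $p$ as $q$ it becomes exactly $\widehat{C}_n(q,t,u,v,w)$ of \eqref{p2}. Performing the same two operations on the right-hand side of \eqref{cf-pqNaraB}: the variable $q$ occurs there only inside the single numerator $(p+q)twz^2$, which becomes $p\,twz^2$, and relabelling $p$ as $q$ turns the whole continued fraction into
\[
\cfrac{1}{1-(u+tv)z-\cfrac{q\,twz^{2}}{1-q(u+tv)z-\cfrac{q^{3}twz^{2}}{1-q^{2}(u+tv)z-\cdots}}},
\]
which is precisely the right-hand side of \eqref{eq:c2}. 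This proves Theorem~\ref{T1}.

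I would also note that Theorem~\ref{T1} admits a direct proof in the spirit of Theorems~\ref{thm:A1} and~\ref{thm:B1}: by Proposition~\ref{P1} the permutations of $\S_{n+1}(312)$ have insertion encodings over $\{f_1,l_1,r_1,m_1\}$, so the Fran\c con--Viennot bijection $\psi_{FV}$ restricts to a bijection between $\S_{n+1}(312)$ and the $2$-Motzkin paths of length $n$ in which the labels are entirely forced by the step type and height (there being no analogue of the extra ``$f_2$'' slot of Proposition~\ref{P3}); transporting the statistics gives the step weights $q^{h}u$ for a level step $\mathrm{L}_b$ and $q^{h}vt$ for a level step $\mathrm{L}_r$ at height $h$, and weight $q^{2h+1}tw$ for a rise--fall pair joining heights $h$ and $h+1$, so Lemma~\ref{flajolet} again yields \eqref{eq:c2}. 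In either route the only step needing thought is the pattern identity above --- equivalently, the fact that avoiding the vincular pattern $\cab$ coincides with avoiding the classical pattern $312$ --- and everything else is a substitution into an already proved result.
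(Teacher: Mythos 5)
Your proof is correct, but it follows a genuinely different route from the paper's. You obtain Theorem~\ref{T1} by specializing $q=0$ in Theorem~\ref{thm:B1}: the key combinatorial input, that avoiding the vincular pattern $\cab$ is equivalent to avoiding the classical pattern $312$ (so the $q=0$ fiber of $\bB_n$ is supported exactly on $\S_{n+1}(312)$, using also the easy inclusion $\S_{n+1}(312)\subseteq\S_{n+1}(3124,4123,3142,4132)$), is a standard fact and your argument for it is sound; since both $\widehat{C}_n$ and the $p$-variable of $\bB_n$ track the same statistic $(\bac)$, the relabelling $p\mapsto q$ then turns the specialized continued fraction \eqref{cf-pqNaraB} into \eqref{eq:c2}, and the substitution is legitimate at the level of formal power series with polynomial coefficients. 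The paper instead proves Theorem~\ref{T1} directly, in parallel with its proofs of Theorems~\ref{thm:A1} and~\ref{thm:B1}: it uses Proposition~\ref{P1} (the insertion encoding of $312$-avoiders) to show that the restriction $\Phi_3$ of the Fran\c con--Viennot bijection maps $\S_{n+1}(312)$ bijectively onto path diagrams of type C, where the labels are forced to equal the step heights, transports the statistics to step weights, and invokes Flajolet's Lemma~\ref{flajolet} --- which is essentially the alternative you sketch in your closing remark, with the same weights. Your specialization argument is shorter and makes explicit how the Type~B polynomial degenerates to the $312$-Catalan polynomial, at the cost of depending on Theorem~\ref{thm:B1}; the paper's route is self-contained and stays within its unified insertion-encoding/Laguerre-history methodology. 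Either way the theorem is established.
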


\begin{remark}
When $(q,t,u,v,w)=(q,t,1,1,1)$, Eq~\eqref{eq:c2} reduces to the third interpretation of \cite[Theorem 1]{FTHZ}.
The above theorem could be derived by \cite[Eq.~(28)]{SZ12} and \cite[Lemma~2.8]{FTHZ}, which was proved by the restricted version of continued fraction on $\S_n$. However, we compute the continued fraction by using {\em insertion encoding} and Fran\c con-Viennot's bijection on $\S_n(321)$. 
\end{remark}



 \begin{Def}\label{def3}
A  path diagram of type C of length $n$ is a pair $(\omega,\,\xi)$, where $\omega$ is a Motzkin path of length $n$, $\xi=(\xi_1,\ldots,\xi_n)$ is integer sequence satisfying that if the $k$-th step of $\omega$ is at height $h$, then $\xi_k=h$.
We denote by $\mathcal{PDC}_n$ the set of path diagrams of type C of length $n$.
\end{Def}

%
%
%
%
%
%
%
%


For $\sigma\in\S_n(312)$, we construct the path diagram $\Phi_3(\sigma)=(\omega,\xi)$, where $\omega=(\omega_0,\omega_1,\ldots,\omega_n)$ is defined as follows. For $j\in[n]$, $s_j=\omega_j-\omega_{j-1}$,
\begin{align}
s_j=\begin{cases}
\textrm{U}&\textrm{if}\;j\in\mathrm{Val}\,\sigma\\
\textrm{D}&\textrm{if}\;j\in\mathrm{Pk}\,\sigma\\
\textrm{L}_b&\textrm{if}\;j\in\mathrm{Da}\,\sigma\\
\textrm{L}_r&\textrm{if}\;j\in\mathrm{Dd}\,\sigma
\end{cases}.
\end{align}
As we have $\omega$, then $\xi$ can be obtained by the height of each step of $\omega$.
\begin{lemma}
The mapping $\Phi_3: \sigma\mapsto(\omega,\xi)$ is a bijection. 
\end{lemma}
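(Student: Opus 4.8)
The plan is to identify $\Phi_3$ with the restriction to $\S_{n+1}(312)$ of the Fran\c con--Viennot bijection $\psi_{FV}\colon\S_{n+1}\to\mathcal{LH}_n$, in the spirit of the proofs already given for $\Phi_1$ and $\Phi_2$. Since $\psi_{FV}$ sends $\sigma$ to a Laguerre history whose underlying $2$-Motzkin path is exactly the path $\omega$ read from the $\mathrm{Val}/\mathrm{Pk}/\mathrm{Da}/\mathrm{Dd}$ profile of $\sigma$, and whose label vector is $\bigl((\bac)_i\,\sigma\bigr)_i$ subject to $0\le(\bac)_i\,\sigma\le h_{i-1}(\mathbf s)$, the maps $\Phi_3$ and $\psi_{FV}$ always agree in the first coordinate, and the whole content of the lemma is the claim $(\star)$: for $\sigma\in\S_{n+1}(312)$ and every $i$, the label $(\bac)_i\,\sigma$ equals the height $h_{i-1}(\mathbf s)$ of the $i$-th step; equivalently, it attains the Laguerre-history upper bound. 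Granting $(\star)$, on $\S_{n+1}(312)$ one has $\Phi_3=\psi_{FV}$, so $\Phi_3$ maps $\S_{n+1}(312)$ into $\mathcal{PDC}_n$ and is injective, both inherited from $\psi_{FV}$.

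The heart of the proof is therefore $(\star)$, which I would prove by induction on $n$ via the insertion-encoding description of Proposition~\ref{P1}: a permutation of $\S_{n+1}(312)$ is assembled by inserting each new maximum into the leftmost slot only, using $f_1,l_1,r_1,m_1$. I would track, through one such insertion, the local change it induces in the $2$-Motzkin word $\mathbf s$ (the step contributed by the new value, together with the possible re-classification of the steps adjacent to it) and the change it induces in each $(\bac)_i$, and check that a leftmost-slot insertion preserves the invariant that every label equals the current step height, whereas any other insertion pushes some label strictly below it. Together with Proposition~\ref{P1} this yields $(\star)$ together with its converse: a permutation whose $\psi_{FV}$-labels are all maximal can only have been produced by leftmost-slot insertions, and hence avoids $312$.

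With $(\star)$ and its converse in hand, surjectivity is immediate: given $(\omega,\xi)\in\mathcal{PDC}_n$, the vector $\xi$ is by Definition~\ref{def3} the height vector of $\omega$, so $(\omega,\xi)$ is a valid Laguerre history with all labels maximal; its $\psi_{FV}$-preimage $\sigma$ avoids $312$ by the converse of $(\star)$ and satisfies $\Phi_3(\sigma)=(\omega,\xi)$. (One may instead dispense with the converse of $(\star)$ by a cardinality argument: $\lvert\mathcal{PDC}_n\rvert$ is the number of $2$-Motzkin paths of length $n$, namely $C_{n+1}=\lvert\S_{n+1}(312)\rvert$, so a well-defined injection between them is automatically a bijection.) Either way $\Phi_3\colon\S_{n+1}(312)\to\mathcal{PDC}_n$ is a bijection. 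As the payoff, transporting the weight $q^{(\bac)}t^{\des}u^{\da}v^{\dd}w^{\val}$ across $\Phi_3$ onto $\mathcal{PDC}_n$ and applying Flajolet's Lemma~\ref{flajolet} gives Theorem~\ref{T1}, exactly as in Sections~\ref{Sec:proofA} and~\ref{Sec:proofB}.

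I expect the only genuine obstacle to be the inductive bookkeeping for $(\star)$: following, through one leftmost-slot insertion of a new maximum, both the local modification of the $2$-Motzkin word and the resulting change of each $(\bac)_i$, and verifying that the identity $(\bac)_i\,\sigma=h_{i-1}(\mathbf s)$ persists while any non-leftmost insertion would break it --- the latter being the precise link back to Proposition~\ref{P1}. Once that is settled, injectivity, surjectivity and the continued-fraction consequence for Theorem~\ref{T1} are routine and mirror the earlier sections.
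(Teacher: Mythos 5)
Your proposal takes essentially the same route as the paper: the paper's entire proof is the one-line observation that, by Proposition~\ref{P1}, $\Phi_3$ is just the restriction of the Fran\c con--Viennot bijection to the $312$-avoiding permutations, which is exactly your identification together with the claim $(\star)$ that the labels attain the height bound. Your additional detail (the insertion-encoding induction for $(\star)$ and the Catalan cardinality shortcut for surjectivity) only makes explicit what the paper leaves implicit, so the proposal is correct and matches the paper's approach.
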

\begin{proof}
By Proposition~\ref{P1}, we can see that $\Phi_3$ is actually a restriction of Fran\c con-Viennot bijection on $\S_n(312)$.
\end{proof}
\emph{Proof of Theorem~\ref{T1}}. For $\sigma\in\S_n(312)$, we derive that 
\begin{align}
q^{(\bac)\,\sigma}t^{\des\,\sigma}u^{\da\,\sigma}v^{\dd\,\sigma}w^{\val\,\sigma}=\prod_{i=1}^{n+1}W(s_i,\xi_i)=W((\omega,\xi)),
\end{align}
with $h$ being the height of the steps $s_i$,
\begin{align}\label{weight:dh}
W(s_i,  \xi_i)
=\begin{cases}
q^{\xi_i}\cdot w&\; \textrm{if}\quad s_i=U;\\
q^{\xi_i}\cdot t&\; \textrm{if}\quad s_i=D;\\
q^{\xi_i}\cdot u&\; \textrm{if}\quad s_i=L_b;\\
q^{\xi_i}\cdot v\cdot t&\; \textrm{if}\quad s_i=L_r.
\end{cases}
\end{align}
Therefore, the corresponding polynomial and weights become
\begin{align}
\widehat{C}_n(q,t,u,v,w)=&\sum_{\pi\in\S_{n+1}(312)}q^{(\bac)\,\pi}t^{\des\,\pi}u^{\da\,\pi}v^{\dd\,\pi}w^{\val\,\pi}\\
=&\sum_{(\omega,\,\xi)\in\mathcal{PDC}_{n}}W((\omega,\xi)).
\end{align}
By Lemma~\ref{flajolet}, we derive the corresponding continued fraction for the generating function of $\widehat{C}_n(q,t,u,v,w)$.

\end{document}